\def\Projan{\mathop{\rm Projan}}
\newtheorem{theorem}{Theorem}[section]
\newtheorem{exa}[theorem]{Example}
\newtheorem{rem}[theorem]{Remark}
\newtheorem{cor}[theorem]{Corollary}
\newtheorem{tont}[theorem]{Definition}
\newtheorem{lem}[theorem]{Lemma}
\newtheorem{Prop}[theorem]{Proposition}
 \newtheorem{exmp}[theorem]{Example}
\newtheorem{thm-def}[theorem]{Theorem/Definition}
\newtheorem{defi}[theorem]{Definition}
\newtheorem{Lem}[theorem]{Lemma}
\newcommand{\cJ}{{\mathcal J}}
\newcommand{\cO}{{\mathcal O}}
\def\Der#1#2{{\displaystyle { {d#1}\over {d#2}}}}
\def\part#1#2{{\partial#1\over\partial#2}}
\let\Sum=\sum \def\sum{\Sum\nolimits}
\def\AF{\hbox{\rm A$_F$\hskip 2pt}}
\def\Projan{\mathop{\rm Projan}}
\begin{document}

\title{ Infinitesimal bi-Lipschitz  Equivalence of Functions}
\author {Terence Gaffney}\thanks{T.~Gaffney was partially supported by PVE-CNPq Proc. 401565/2014-9} 
\begin{abstract}
We introduce two different notions of infinitesimal bi-Lipschitz equivalence for functions, one related to bi-Lipschitz triviality of families of functions, one related to homeomorphisms which are bi-Lipschitz on the fibers of the functions in the family. We show that the first is not a generic condition, and that the second is.\end{abstract}

\maketitle
\selectlanguage{english}

\section{Introduction}

In an earlier paper, \cite {G-L}, we developed the idea of infinitesimal bi-Lipschitz equivalence for families of hypersurfaces, and showed that this condition was generic. That  is, given a family of hypersurfaces with isolated singularities, which contained a smooth parameter space $Y$ as a stratum, then $Y$ contained a Zariski open set $U$, such that the condition held along $U$.

It is known that there are moduli for bi-Lipschitz right equivalence in families of functions. This was proved by Henry and Parusinski, \cite {H-P}, who showed that even in the case of two variables, a bi-Lipschitz invariant  related to the order of vanishing of $f$ along its polar curve could change  continuously in a family. On the positive side, Valette \cite{V} showed, that asking that the trivializing homeomorphisms have Lipschitz restriction to the fibers of the functions was a generic condition. 

In this paper we develop two notions of infinitesimal bi-Lipschitz equivalence, one related to each of the above equivalence relations, and show that the first is not generic while the second is. In examples showing the first fails to be generic the Henry-Parusinski invariant appears.

The approach of \cite {G-L} was based on the Lipschitz saturation of a ring, an idea due to Pham and  Teissier (cf. \cite{P}, \cite{PT}). Using this we defined the Lipschitz saturation of an ideal  and related this notion to the integral closure of a certain submodule associated to the family. Since whether or not a function is Lipschitz depends on the behavior of the function at pairs of points, it is not surprising that it is convenient in these questions to work on the product of a space with itself, and with the double of a function, a concept defined in section two. A curve on the product of a space with itself, is given by a pair of curves on the space.

Checking that a function $g$ is in the Lipschitz saturation of the Jacobian ideal of a function $f$, $f\:{\mathbb C}^n\to {\mathbb C}$ becomes a matter of checking that the double of $g$, denoted $g_D$, is in the double of the Jacobian ideal along pairs of curves on ${\mathbb C}^n$. In section 3 we give a criterion if $n=2$ for the inclusion to hold along pairs of polar curves.  Checking that $g$ is in the Lipschitz saturation of the Jacobian ideal of $f$ relative to $f$ means checking  that $g_D$ is in the double of the Jacobian ideal of $f$ along pairs of curves in the same fiber of $f$.

In the next section we review the background necessary to understand the saturation of an ideal, and the connection between saturation and integral closure.  We introduce our two notions of saturation. The invariant of Henry and Parusinski already shows the importance of the behavior of $f$ along its polar curves. Surprisingly, it is rare for a function to be in the saturation of its Jacobian ideal,  but we prove a lemma showing that it is in the saturation of the Jacobian ideal relative to $f$. 

 In section three we  define our two conditions, and give geometric interpretations of them. Given a vector field $v$ on $Y$, they imply the existence of a collection of vector fields on ${\mathbb C}^{n+k}$ each extending $v$, and defined on some open (but not Zariski open) subset of  ${\mathbb C}^{n+k}$ containing $Y$ in its closure, such that the field has the appropriate Lipschitz property.  

Section three concludes with the proof of genericity for the condition relative to $F$. 

The proof of the main theorem was done in May 2014 in preparation for a collaboration with Walter Neumann, Anne Pichon and Bernard Teissier supported by the CIRM through its ``Research in Pairs" program. Work on some of the other propositions and examples of this paper were deeply influenced by our continuing conversations.

It is a pleasure to acknowledge my debt to them.

\section{Background on Lipschitz saturation of an ideal}

Let  $(X, x)$ be a germ of a complex analytic space and $X$ a
small representative of the germ and let $\mathcal{O}_{X}$ denote the
structure sheaf on a complex analytic space $X$. One of the formulations of the definition of the infinitesimal Lipschitz condition uses the theory of
integral closure of modules, which we now review. This theory will also provide the tools for working with the conditions we define later.

\begin{defi} Suppose $(X, x)$ is the germ of a complex analytic space,
$M$ a submodule of $\mathcal{O}_{X,x}^{p}$. Then $h \in
\mathcal{O}_{X,x}^{p}$ is in the integral closure of $M$, denoted
$\overline{M}$, if for all analytic $\phi : (\mathbb{C}, 0) \to (X,
x)$, $h \circ \phi \in (\phi^{*}M)\mathcal{O}_{1}$. If $M$ is a
submodule of $N$ and $\overline{M} = \overline{N}$ we say that $M$
is a reduction of $N$.
\end{defi}

To check the definition it suffices to check along a finite number of curves whose generic point is in the Zariski open subset of $X$ along which $M$ has maximal rank. (Cf. \cite {G-2}.)

We will also need the notion of the Lipschitz saturation of an ideal and the relative saturation. The construction of these objects  has much in common with the formation of the integral closure of an ideal or module, for the construction of the integral closure of an ideal is an example of a general approach to constructing closure operations on sheaves of ideals and modules, given a closure operation on a sheaf of rings. Here is the idea. Denote the closure operation on the ring $R$ by $C(R)$. Given a ring, R, blow-up $R$ by an ideal $I$. (If we have a module $M$ which is a submodule of a free module $F$, form the blow-up $B_{\rho(\mathcal{M})}(\Projan \mathcal{R}(F))$, as in the last section.) Use the projection map of the blow-up to the base to pullback $I$ to the blow-up. Now apply the closure operation to the structure sheaf of the blow-up, and look at the sheaf of ideals generated by the pull back of $I$. The elements of the structure sheaf on the base which pull back to elements of the ideal sheaf are the elements of $C(I)$.

 Two examples of this are given by the normalization of a ring and the semi-normalization of a ring. (In the normalization, all of the bounded meromorphic functions become regular, while in the semi-normalization only those which are continuous become regular. Cf \cite{GV} for details on this construction.)  Consider $B_I(X)$, the blow-up of $X$ by $I$. If we pass to the normalization of the blow-up, then $h$ is in $\bar I$ iff and only if the pull back of $h$ to the normalization is in the ideal generated by the pullback of $I$ \cite{LT}. If we pass to the semi-normalization of the blow-up, then $h$ is in the weak sub-integral closure of $I$ denoted ${}^*I$, iff the pullback of $h$ to the semi-normalization is in the ideal generated by the pullback of $I$. (For a proof of this and more details on the weak subintegral closure cf. \cite {GV}).
 
 There is another way to look at the closure operation defined above; in the case of the integral closure of an ideal, we are looking at an open cover of the co-support of an ideal sheaf, and choosing locally bounded meromorphic functions on each open set, and seeing if we can write a regular function locally in terms of generators of the ideal using our locally bounded meromorphic functions as coefficients. This suggests, that in the Lipschitz case, we use locally bounded meromorphic functions which satisfy a Lipschitz condition. The closure operation on rings that this indicates is the Lipschitz saturation of a space, as developed by Pham-Teissier (\cite {PT}). 
 
In the approach of Pham-Teissier, let $A$ be a commutative local ring over ${\Bbb C}$, and $\bar A$ its normalization. (We can assume $A$ is the local ring of an analytic space $X$ at the origin in ${\Bbb C}^n$.) Let $I$ be the kernel of the inclusion
$$\bar A\otimes_{{\Bbb C}}\bar A\to \bar A\otimes_{A}\bar A.$$

In this construction, the tensor product is the analytic tensor product which has the right universal property for the category of analytic algebras, and which gives the analytic algebra for the analytic fiber product. 

Pham and Teissier then defined the Lipschitz saturation of $A$, denoted $\tilde A$, to consist of all elements $h\in \bar A$ such that $h\otimes 1-1\otimes h \in \bar A\otimes_{{\Bbb C}}\bar A$ is in the integral closure of $I$.  (For related results see \cite {L}.)

The connection between this notion and that of Lipschitz functions is as follows. If we pick generators $(z_1,\dots,z_n)$ of the maximal ideal of the local ring $A$, then $z_i\otimes 1-1\otimes z_i \in \bar A\otimes_{{\mathbb C}}\bar A$ give a set of generators of $I$. Choosing $z_i$ so that they are the restriction of coordinates on the ambient space,  the integral closure condition is equivalent to  
$$\|h(z_1,\dots,z_n)-h(z'_1,\dots,z'_n)\|\le C sup_i\|z_i-z'_i\|$$
holding on some neighborhood $U$, of $(0,0)$ on $X\times X$. This last inequality is what is meant by the meromorphic function $h$ being Lipschitz at the origin on $X$. (Note that the integral closure condition is equivalent to the inequality holding on a neighborhood $U$ for some $C$  for any set of generators of the maximal ideal of the local ring $A$. The constant $C$ and the neighborhood $U$ will depend on the choice.)

If $X,x$ is normal, then passing to the Lipschitz saturation doesn't add any functions. Denote the saturation of the blow-up by $SB_I(X)$, and the map to $X$ by $\pi_S$. Then we make the definition:

\begin{defi} let $I$ be an ideal in ${\mathcal{O}}_{X,x}$, then the {\bf Lipschitz saturation} of the ideal $I$, denoted $I_S$, is the ideal
$I_{S}=\{h\in{\mathcal{O}}_{X,x} \vert \pi^*_S(h)\in \pi^*_S(I)\}$.
\end{defi}

Since the normalization of a local ring $A$ contains the seminormalization of $A$, and the seminomalization contains the Lipschitz saturation of $A$, it follows that 
$\bar I\supset {}^*I\supset I_S\supset I$. In particular, if $I$ is integrally closed, all three sets are the same.

Here is a viewpoint on the Lipschitz saturation of an ideal $I$, which will be useful later. Given an ideal, $I$, and an element $h$ that we want to check for inclusion in $I_S$, we can consider $(B_I(X), \pi)$, $\pi^*(I)$ and $h\circ \pi$. Since $\pi^*(I)$ is locally principal, working at a point $z$ on the exceptional divisor $E$, we have a local generator $f\circ \pi$ of  $\pi^*(I)$. Consider the quotient $(h/f)\circ \pi$. Then $h\in I_S$ if and only if at the generic point of any component of $E$, $(h/f)\circ \pi$ is Lipschitz with respect to a system of local coordinates. If this holds we say $h\circ \pi\in (\pi^*(I))_S$.

We can also work on the normalized blow-up, $(NB_I(X), \pi_N)$. Then we say $h\circ \pi_N\in (\pi_N^*(I))_S$ if $(h/f)\circ \pi_N$  satisfies a Lipschitz condition at the generic point of each component of the exceptional divisor of $(NB_I(X), \pi_N)$ with respect to the pullback to $(NB_I(X), \pi_N)$ of a system of local coordinates on $B_I(X)$ at the corresponding points of $B_I(X)$. As usual, the inequalities at the level of $NB_I(X)$ can be pushed down and are equivalent to inequalities on a suitable collection of open sets on $X$. 

This definition can be given an equivalent statement using the theory of integral closure of modules. Since Lipschitz conditions depend on controlling functions at two different points as the points come together, we should look for a sheaf defined on $X\times X$.  We describe a way of moving from a sheaf of ideals on $X$ to a sheaf on $X\times X$.  Let $h\in \cO_{X,x}$; define $h_D$  in $ \cO^2_{X\times X,{x,x}}$, as $(h\circ \pi_1,h\circ\pi_2)$, $\pi_i$ the projection to the i-th factor of the product. Let $I$ be an ideal in $\cO_{X,x}$;  then $I_D$ is the submodule of $\cO^2_{X\times X,{x,x}}$ generated by the $h_D$ where $h$ is an element of $I$.

If $I$ is an ideal sheaf on a space $X$ then intuitively, $h\in \bar I$ if $h$ tends to zero as fast as the elements of $I$ do as you approach a zero of $I$. If $h_D$ is in ${\overline {I_D}}$ then the element defined by $(1,-1)\cdot (h\circ \pi_1,h\circ\pi_2)=h\circ \pi_1-h\circ\pi_2$ should be in the integral closure of the ideal generated by applying $(1,-1)$ to the generators of $I_D$, namely the ideal generated by $g\circ \pi_1-g\circ\pi_2$, $g$ any element of $I$. This implies the difference of $h$ at two points goes to zero as fast as the difference of elements of $I$ at the two points go to zero as the points approach each other. It is reasonable that elements in $I_S$ should have this property. In fact we have:

 \begin {theorem} Suppose $(X,x)$ is a complex analytic set germ, $I\subset\cO_{X,x}$. Then $h\in I_{S}$ if and only if 
$h_D\in{\overline I_D}$.
\end{theorem}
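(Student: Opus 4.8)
The plan is to rephrase both conditions as statements about pairs of analytic curves on $X$ and to check that they coincide.

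For $\overline{I_D}$ this is a definition‑chase. A curve $\phi\colon(\mathbb C,0)\to(X\times X,(x,x))$ is exactly a pair $\phi=(\phi_1,\phi_2)$ of curves on $X$, and $h_D\in\overline{I_D}$ says that for every such pair the vector $(h\circ\phi_1,h\circ\phi_2)$ lies in the $\mathcal O_1$‑submodule $M_\phi\subset\mathcal O_1^2$ generated by all $(g\circ\phi_1,g\circ\phi_2)$, $g\in I$. Working over the discrete valuation ring $\mathcal O_1=\mathbb C\{t\}$, I would fix a $g_0\in I$ generic enough that $\mathrm{ord}_t(g_0\circ\phi_i)=\mathrm{ord}_t(\phi_i^{*}I\,\mathcal O_1)$ for $i=1,2$ simultaneously (a Zariski‑dense condition on $g_0$, since each $\phi_i^{*}I\,\mathcal O_1$ is principal); note that the first‑factor projection of $M_\phi$ is $\phi_1^{*}I\,\mathcal O_1$, so membership forces $h\circ\phi_i\in\phi_i^{*}I\,\mathcal O_1$, i.e.\ $h\in\overline I$. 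A short computation over $\mathcal O_1$ — subtract the multiple of $(g_0\circ\phi_1,g_0\circ\phi_2)$ matching the first coordinate, then compute $\wedge^2 M_\phi$ and $M_\phi\cap(0\oplus\mathcal O_1)$ — then shows that $h_D\in\overline{I_D}$ is equivalent to $h\in\overline I$ together with the inequality
\[
\mathrm{ord}_t\!\Bigl(\tfrac{h}{g_0}\circ\phi_1-\tfrac{h}{g_0}\circ\phi_2\Bigr)\ \ge\ \min_{g\in I}\ \mathrm{ord}_t\!\Bigl(\tfrac{g}{g_0}\circ\phi_1-\tfrac{g}{g_0}\circ\phi_2\Bigr)
\]
for every pair $(\phi_1,\phi_2)$, where $g/g_0$ and $h/g_0$ are the quotients on $B_I(X)$ (holomorphic along the lifts of the $\phi_i$ because $g,h\in\overline I$). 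Call this inequality $(\ast)_\phi$.

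On the other side I would use the blow‑up reformulation of $I_S$ recorded in the text: $h\in I_S$ iff, viewing $(h/f)\circ\pi_N$ as a weakly holomorphic function on $B_I(X)$, it is Lipschitz at the generic point of each component of the exceptional divisor $E$ with respect to local coordinates on $B_I(X)$, $f$ a local generator of $\pi^{*}I$. By the Pham--Teissier description this Lipschitz property is itself an integral‑closure statement, so it can be tested along pairs of curves: it holds iff for every pair $(\gamma_1,\gamma_2)$ on $NB_I(X)$ landing at a common generic point of a component of $E$, the order of $(h/f)\circ\pi_N\circ\gamma_1-(h/f)\circ\pi_N\circ\gamma_2$ is at least the minimum, over the chosen coordinates $w_i$, of $\mathrm{ord}_t(w_i\circ\gamma_1-w_i\circ\gamma_2)$. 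Pushing these curves down to $X$ via $\phi_i:=\pi_N\circ\gamma_i$, and writing $h/f=(h/g_0)\cdot(g_0/f)$ with $g_0/f$ a unit near a generic point of $E$, this becomes exactly $(\ast)_\phi$ for those pairs $(\phi_1,\phi_2)$ whose lifts to $NB_I(X)$ converge to one common generic point of $E$. Since $h_D\in\overline{I_D}$ yields $(\ast)_\phi$ for all pairs, in particular for all such pairs, this gives one implication: $h_D\in\overline{I_D}\Rightarrow h\in I_S$.

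For the converse I must upgrade ``$(\ast)_\phi$ for pairs whose lifts converge to a common generic point of $E$'' to ``$(\ast)_\phi$ for all pairs.'' I would split into cases. If the lifts $\tilde\phi_1,\tilde\phi_2$ converge to points of $E$ lying over distinct directions of $I$, then the functions $g/g_0$, $g\in I$, already separate those two points for generic $g_0$, so the right‑hand side of $(\ast)_\phi$ is $0$ and $(\ast)_\phi$ holds automatically once $h\in\overline I$. The genuine case is that of lifts converging to one and the same point of $E$, possibly a non‑generic one; this is where the content lies. Here I would invoke the remark that $\overline{I_D}$, and hence $(\ast)$, need only be verified along finitely many curves whose generic point lies in the locus where $I_D$ has maximal rank, together with an upper‑semicontinuity argument showing that the ``Lipschitz defect'' measuring failure of $(\ast)$ is constant along a Zariski‑open subset of each component of $E$ — so that its vanishing at the generic point of that component, which is precisely what $h\in I_S$ provides, forces its vanishing along every such test curve. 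I expect this reconciliation of the ``generic point of $E$'' formulation of $I_S$ with the ``all pairs of curves'' formulation of $\overline{I_D}$ — in essence a specialization/semicontinuity argument along the exceptional divisor — to be the main obstacle; the curve bookkeeping and the linear algebra over $\mathbb C\{t\}$ producing $(\ast)$ are routine by comparison.
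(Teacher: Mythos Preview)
The paper does not actually prove this theorem here: its entire argument is to cite Theorem~2.3 of \cite{GL1}, where the equivalence is established under the extra hypothesis $h\in\overline I$, and then to observe that this hypothesis is automatic from either side (from $h\in I_S$ because $I_S\subset\overline I$, and from $h_D\in\overline{I_D}$ by applying the covector $(1,0)$, which gives $h\circ\pi_1\in\overline{\pi_1^*I}$). So the paper's ``proof'' is a two-line reduction to earlier work; your proposal is an attempt at an independent, self-contained argument, which is a genuinely different and more ambitious route.

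Your outline has the right architecture: translate $h_D\in\overline{I_D}$ into a pairwise-curve condition $(\ast)_\phi$, translate $h\in I_S$ via the Pham--Teissier description into a Lipschitz inequality on $NB_I(X)$ testable on pairs of curves, and match them. The forward direction is essentially fine. One small point to watch: the local coordinates on $B_I(X)$ used in the Lipschitz formulation of $I_S$ consist of the pullbacks of the ambient coordinates $z_i$ \emph{together with} the projective chart coordinates $T_j/T_i$, not only the latter; make sure your right-hand side in $(\ast)_\phi$ accounts for both when you compare with the Lipschitz condition, otherwise the two minima you are equating need not coincide.

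The real gap is exactly where you flag it. In the converse you must pass from ``Lipschitz at the generic point of each component of $E$'' to ``$(\ast)_\phi$ for all test pairs needed to certify $h_D\in\overline{I_D}$.'' Your appeal to a semicontinuity/specialization principle for a ``Lipschitz defect'' along $E$ is not yet an argument: you have not said what function on $E$ you are claiming to be upper-semicontinuous, nor why constancy on a Zariski-open subset of a component forces the required order inequality for curves landing at special points (or at points on distinct components, or off $E$ entirely). The finitely-many-curves remark you cite only lets you choose test curves with generic point in the maximal-rank locus of $I_D$ on $X\times X$; it does not by itself let you move those curves to hit the generic point of a component of $E$ upstairs. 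This reconciliation is precisely the content of the proof in \cite{GL1}, and you would need to supply it rather than gesture at it.
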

\begin {proof} This is theorem 2.3 of \cite {GL1}, and is proved there under the additional assumption that $h\in\bar I$. However, as we have noted if $h\in I_{S}$, then $h\in\bar I$. If $h_D\in{\overline I_D}$, it follows that $(1,0)\cdot h_D$ is in the integral closure of $\pi_1^*(I)$ on $X\times X$, which clearly implies $h\in\bar I$.
\end{proof}

Now we add the necessary structure to deal with families of functions.

Suppose $F\:{\mathbb{C}}^{n+k},0\to {\mathbb C}$, $F$ a family of functions parameterized by ${\mathbb C}^k$, $y$ coordinates on  ${\mathbb C}^k$, $z$ coordinates on   ${\mathbb C}^n$. Let $J_z(F)$ denote the ideal generated by the partial derivatives of $F$ with respect to the $z$ coordinates.

In \cite{G-L}, the notion of the relative Lipschitz saturation of an ideal was developed. Instead of using the product of the normalization of $B_{I}(X)$ with itself, one uses the fiber product of the normalization over $Y$. When we are working in the context of a family of spaces we will also use $I_S$ to denote this saturation. In a similar way, we can develop an equivalent integral closure condition using modules as before, just working on $X\times_Y X$ instead of $X\times X$. 

In practice we will be working with ideal sheaves on a family of spaces, where the ideals vanish on $Y$, and our local coordinates at points of $B_I(X^{n+k})$ consist of the pullbacks of a set of generators of $m_Y$ and local coordinates on the projective space(s) in the blow-up.  

It is not difficult to check that Theorem 2.3 of \cite{GL1} continues to hold in this new context.

If we consider ${\mathbb{C}}^{n+k}\times_Y{\mathbb{C}}^{n+k}$, then there is the hypersurface $X(F)$ defined by $F\circ p_1-F\circ p_2=0$. We can use this hypersurface to define the Lipschitz saturation of a coherent ideal sheaf relative to the fibers of $F$. Let  $X(F)_0$ denote the Z-open subset $(y, z_1,z_2)$ where neither $z_i=0$

\begin{defi} Suppose $F\:{\mathbb{C}}^{n+k},0\to {\mathbb C}$, $F$ a family of functions parameterized by ${\mathbb C}^k$, $I$ an ideal sheaf on $ {\mathbb{C}}^{n+k}$. 
Then the Lipschitz saturation of $I$ relative to $Y={\mathbb{C}}^{k}$ and $F$ and denoted $I_{S,F}$ consists of functions $h$ whose induced functions on 
$NB_{I}({\mathbb{C}}^{n+k})\times_YNB_{I}({\mathbb{C}}^{n+k})$ satisfy the necessary conditions for the inclusion of $h$ in $I_{S }$
when restricted to $NB_{I}({\mathbb{C}}^{n+k})\times_{Y,F}NB_{I}({\mathbb{C}}^{n+k})$.
\end{defi}
In this paper, $I$ will be either $J(f)$ (ie. $k=0$) or $J_z(F)$.
Notice that 
the closure of $X(F)_0$ in $NB_{J_z(F)}({\mathbb{C}}^{n+k})\times_YNB_{J_z(F)}({\mathbb{C}}^{n+k})$ is just 
$NB_{J_z(F)}({\mathbb{C}}^{n+k})\times_{Y,F}NB_{J_z(F)}({\mathbb{C}}^{n+k})$.

In Teissier's development of condition C, the first step in the proof of the idealistic Bertini Theorem is to show that for any element $f\in m_n$, then $f\in \overline {J(f)}$. This step is easy enough to be an exercise in \cite{T1}. It turns out that the analogue holds for $J(f)_{S,f} $ but not for 
$J(f)_{S} $. We first prove a lemma which does hold for both notions.

\begin{Lem} Suppose $f\:{\mathbb{C}}^{n},0\to {\mathbb C},0$, then for all $\Phi=(\phi_1,\phi_2)\: {\mathbb C},0\to {\mathbb{C}}^{n}\times {\mathbb{C}}^{n},(0,0)$, $tD(f_D\circ\Phi)\in\Phi^*(J_D(f))$, $t$ a coordinate on ${\mathcal O}^1$.
 
\end{Lem}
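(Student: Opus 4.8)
The plan is to reduce the statement, via the chain rule, to a single structural fact about $J_D(f)$ together with an elementary property of the discrete valuation ring $\mathcal{O}_1=\mathbb{C}\{t\}$. Throughout write $\phi_j=(\phi_{j,1},\dots,\phi_{j,n})$, $f_i=\partial f/\partial z_i$, and let $(z,w)$ be the coordinates on the two factors of $\mathbb{C}^n\times\mathbb{C}^n$, so that $f_D=(f(z),f(w))$ and $J_D(f)$ is the $\mathcal{O}_{\mathbb{C}^n\times\mathbb{C}^n}$-submodule of $\mathcal{O}^2$ generated by the doubles $h_D=(h\circ\pi_1,h\circ\pi_2)$, $h\in J(f)$. (If $\nabla f(0)\neq 0$ then $J(f)$ is the unit ideal and the statement is trivial, so the case of interest is $0$ a critical point, although the argument below does not use this.)

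First I would apply the chain rule to get, for $j=1,2$, the identity $tD(f\circ\phi_j)=\sum_i (f_i\circ\phi_j)(t\phi_{j,i}')$. Setting $\delta_i=\phi_{1,i}-\phi_{2,i}$ and writing $t\phi_{1,i}'=t\phi_{2,i}'+t\delta_i'$, this yields the decomposition
$$tD(f_D\circ\Phi)=\sum_{i=1}^n (t\phi_{2,i}')\,\Phi^*\bigl((f_i)_D\bigr)\ +\ \Bigl(\sum_{i=1}^n (f_i\circ\phi_1)(t\delta_i'),\ 0\Bigr),$$
where $(f_i)_D=(f_i\circ\pi_1,f_i\circ\pi_2)\in J_D(f)$ since $f_i\in J(f)$, and $\Phi^*\bigl((f_i)_D\bigr)=(f_i\circ\phi_1,f_i\circ\phi_2)$. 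The first sum is manifestly in $\Phi^*(J_D(f))$ because its coefficients $t\phi_{2,i}'$ lie in $\mathcal{O}_1$; so everything comes down to placing the ``error term'' $\bigl(\sum_i(f_i\circ\phi_1)(t\delta_i'),0\bigr)$ in $\Phi^*(J_D(f))$ as well.

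The key point — and the only step with real content — is that $J_D(f)$ contains one-sided elements invisible among the $(f_i)_D$: since $z_if_i\in J(f)$ one has $(z_if_i)_D=(z_if_i(z),w_if_i(w))\in J_D(f)$, while also $(z_i\circ\pi_2)\cdot(f_i)_D=(w_if_i(z),w_if_i(w))\in J_D(f)$; subtracting gives $\bigl((z_i-w_i)(f_i\circ\pi_1),0\bigr)\in J_D(f)$, hence $\bigl(\delta_i(f_i\circ\phi_1),0\bigr)\in\Phi^*(J_D(f))$. To finish I would invoke the elementary fact that the operator $t\,d/dt$ carries every ideal $(t^k)$ of $\mathcal{O}_1$ into itself (because $t\,d/dt(t^kg)=t^k(kg+tg')$), so $t\delta_i'=c_i\delta_i$ for some $c_i\in\mathcal{O}_1$; when $\delta_i=0$ the corresponding summand is $0$ and there is nothing to prove. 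Multiplying the previous membership by $c_i$ gives $\bigl((f_i\circ\phi_1)(t\delta_i'),0\bigr)\in\Phi^*(J_D(f))$, and summing over $i$ completes the argument. The factor $t$ in the statement is precisely what compensates for the order-drop caused by differentiating $\delta_i$; without it the identity already fails for $f=z^2$, $n=1$.
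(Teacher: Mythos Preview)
Your argument is correct and follows essentially the same route as the paper's proof: both expand $tD(f_D\circ\Phi)$ by the chain rule, peel off the ``diagonal'' combination $\sum_i (t\phi_{\bullet,i}')\,\Phi^*((f_i)_D)$, and reduce to showing that the residual one-sided term is in $\Phi^*(J_D(f))$ via the equality of $t$-orders of $\delta_i$ and $t\delta_i'$. The only cosmetic differences are that the paper places the error in the second component (using $f_i\circ\phi_2$) while you place it in the first, and that you spell out explicitly the membership $\bigl((z_i-w_i)(f_i\circ\pi_1),0\bigr)\in J_D(f)$ (via $(z_if_i)_D-(z_i\circ\pi_2)(f_i)_D$), which the paper leaves implicit but uses again later when listing generators of $\Phi^*(J(f)_D)$ and of $J_z(F)_D$.
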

\begin{proof} Expand $tD(f_D\circ\Phi)$ using the chain rule. Then the assertion is true provided 
$$(0,\Sum \part f{z_i}\circ\phi_2(\Der{{x_2}_i}t t-\Der{{x_1}_i}t t))\in \Phi^*(J_D(f)).$$
Now, expanding ${x_j}_i$ in powers of $t$, we have ${x_j}_i=\Sum a^j_{il}t^l$, so 
$$\Der{{x_2}_i}t t-\Der{{x_1}_i}t t=\Sum la^2_{il}t^l-\Sum la^1_{il} t^l.$$
Meanwhile 
$${x_2}_i-{x_1}_i=\Sum a^2_{il}t^l-\Sum a^1_{lt} t^l.$$
So the order in $t$ of ${x_2}_i-{x_1}_i$ and $\Der{{x_2}_i}t t-\Der{{x_1}_i}t t$ are the same. This implies 
$(0,\part f{z_i}\circ\phi_2(\Der{{x_2}_i}t t-\Der{{x_1}_i}t t))\in \Phi^*(J_D(f))$
\end{proof}

Now we come to the analogue of  $f\in \overline {J(f)}$.

\begin{Prop} Suppose $f\:{\mathbb{C}}^{n},0\to {\mathbb C},0$, then $f\in J (f)_{S,f}$.
\end{Prop}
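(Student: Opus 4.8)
The plan is to combine the Lemma just proved with the relative form of Theorem 2.3 (the criterion ``$h\in I_{S}$ iff $h_D\in\overline{I_D}$'', which holds also for the fiber product over the fibers of $f$, as noted above) and the curve definition of integral closure. First I would note that, applied to $I=J(f)$ with $k=0$, that criterion says $f\in J(f)_{S,f}$ is equivalent to $f_D\in\overline{J_D(f)}$ on $NB_{J(f)}(\mathbb{C}^n)\times_{Y,f}NB_{J(f)}(\mathbb{C}^n)$, the closure of $X(f)_0$. Since integral closure is tested along analytic curves, and a curve into this fiber product is, after reparametrization, given by a pair of curve germs $\Phi=(\phi_1,\phi_2)\:(\mathbb{C},0)\to(\mathbb{C}^n\times\mathbb{C}^n,(0,0))$ lying in a common fiber of $f$, i.e. with $f\circ\phi_1=f\circ\phi_2$ in $\mathcal{O}_1$, it suffices to prove $f_D\circ\Phi\in\Phi^*(J_D(f))\mathcal{O}_1$ for every such pair.

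So fix such a $\Phi$ and set $g:=f\circ\phi_1=f\circ\phi_2\in\mathcal{O}_1$, with $g(0)=0$. If $g\equiv0$ then $f_D\circ\Phi=(0,0)$ and there is nothing to prove, so assume $g\ne0$ and let $d\ge1$ be its order in $t$, say $g=c_dt^d+\cdots$ with $c_d\ne0$. Because $f\circ\phi_1=f\circ\phi_2$, the two components of $D(f_D\circ\Phi)$ both equal $g'$, so the Lemma gives
$$(tg',\,tg')=tD(f_D\circ\Phi)\in\Phi^*(J_D(f))\mathcal{O}_1.$$
Now $tg'=dc_dt^d+\cdots$ has the same order $d$ in $t$ as $g$; since $\mathcal{O}_1$ is a discrete valuation ring this forces $g=\alpha\cdot tg'$ for some $\alpha\in\mathcal{O}_1$ (in fact a unit). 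Hence
$$f_D\circ\Phi=(g,\,g)=\alpha\cdot(tg',\,tg')\in\Phi^*(J_D(f))\mathcal{O}_1,$$
which is exactly what was needed, and $f\in J(f)_{S,f}$ follows.

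The computations here are immediate; the one step that needs care is the opening reduction, namely that membership in the relative saturation $J(f)_{S,f}$ really can be tested one fiber-pair of curves at a time --- that the relative version of Theorem 2.3 and the curve criterion transport correctly to the blow-up fiber product $NB_{J(f)}(\mathbb{C}^n)\times_{Y,f}NB_{J(f)}(\mathbb{C}^n)$, whose base is the closure of $X(f)_0$. Once that is granted the Lemma does all the work: it produces $tg'$ in the pulled-back module, and the DVR structure of $\mathcal{O}_1$ upgrades $tg'$ to $g$ precisely because the two have equal $t$-order. Note finally that one never needs the classical fact $f\in\overline{J(f)}$ as a separate input --- it is recovered as the special case $\phi_1=\phi_2$ of the argument, where the common-fiber condition is automatic.
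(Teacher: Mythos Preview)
Your argument is correct and is essentially the paper's own proof: both reduce to testing curves $\Phi=(\phi_1,\phi_2)$ into $\mathbb{C}^n\times_f\mathbb{C}^n$, invoke the Lemma to get $(tg',tg')\in\Phi^*(J_D(f))$ with $g=f\circ\phi_1=f\circ\phi_2$, and then use that $tg'$ and $g$ have equal $t$-order to conclude $(g,g)\in\Phi^*(J_D(f))$. The only cosmetic difference is that the paper factors out $(t^l,t^l)$ while you write $g=\alpha\cdot tg'$ with $\alpha$ a unit; these are the same step.
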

\begin{proof} Let $\Phi=(\phi_1,\phi_2)\: {\mathbb C},0\to {\mathbb{C}}^{n}\times_f {\mathbb{C}}^{n},(0,0)$. The hypothesis implies that the Taylor expansions of $f\circ \phi_i$ are the same, say $\Sum a_kt^k$. Then $tD(f_D\circ\Phi)\in\Phi^*(J_D(f))$ implies $(\Sum ka_kt^k,\Sum ka_kt^k)$ is in 
$\Phi^*(J_D(f))$ Suppose $l$ is the order in $t$ of $\Sum a_kt^k$. Then 

 $$tD(f_D\circ\Phi)\in\Phi^*(J_D(f))=(t^l,t^l)(\Sum  ka_kt^{k-l})$$ 
 Since $\Sum  ka_kt^{k-l}$ is a unit, $(t^l,t^l)\in \Phi^*(J_D(f))$ which implies the result. 
\end{proof}

The above proposition does hold for weighted homogenous germs, because for these germs we have $f\in J(f)$. But as the next example shows it does not even hold for  quasi-homogenous germs in two variables in general.

\begin{exmp} Let $f(x,y)=(1/3)x^3-y^7-xy^5$. Here wt$(x)=7$, wt$(y)=3$, wt$(f)=21$, wt$(xy^5)=22$. Consider the family of functions 
$F_t(x,y)=f(x,y)-txy^5$ This family is $\mu$ constant because the weight of $f_t$ is greater than the weight of $f$, hence the zero sets are equisingular, and the the family of functions has a rugose trivialization. However, $f\notin J(f)_S$.\end{exmp}

Here are the details of the calculation for the example. The polar curve of $f$ defined by  $f_x=0$ is given by $x=\pm y^{5/2}$; we define $\Phi$ using the two branches of this polar curve to be $\Phi(t)=[(t^5,t^2),(-t^5,t^2)]$. Then $\Phi^*(J(f)_D)$ has two generators $(f_y\circ\phi_1,f_y\circ\phi_2)$ and $(0,(f_y\circ \phi_2)(x\circ\phi_1-x\circ\phi_2))$. This follows because $(f_x)_D\circ\Phi=(0,0)$, and $(y\circ\phi_1-y\circ\phi_2)=0$.
 The inclusion $f_D\circ\Phi\in\Phi^*(J(f)_D)$ is equivalent to 
 $$f\circ\phi_2-f_y\circ\phi_2({{f\circ\phi_1}\over{f_y\circ\phi_1}})\in (f_y\circ\phi_2)(x\circ\phi_1-x\circ\phi_2)$$ 
In turn this is equivalent to:

$$-t^{14}+(2/3)t^{15}-(-t^{14}-(2/3)t^{15}){(1-5/7t)\over{(1+5/7t)}}\in (t^{12})(t^5)$$

$$t^{15}\in(t^{17}),$$
which is false.

\section{Infinitesimal Lipschitz Equisingularity Conditions for Functions} 

In this section we define two infinitesimal Lipschitz equisingularity conditions based on the two definitions of the previous section. We show the first is not a generic condition while the second is.

\begin{defi} A family of functions $F\:{\mathbb{C}}^{n+k},0\to {\mathbb C}$, $F$  parameterized by ${\mathbb C}^k$ is infinitesimally Lipschitz equisingular at the origin provided $$\part F{y_i}\in J_z(F)_S$$for all $y_i$. \end{defi}

\begin{defi} A family of functions $F\:{\mathbb{C}}^{n+k},0\to {\mathbb C}$, $F$  parameterized by ${\mathbb C}^k$ is infinitesimally fiber-wise Lipschitz equisingular at the origin provided $$\part F{y_i}\in J_z(F)_{S,F}$$for all $y_i$. \end{defi}

 Because both conditions can be phrased in integral closure terms, if a condition holds at a point, it holds at all points in a Z-open subset of $Y$.

\begin{exmp} Consider the family $F(t,x,y)=x^3-3t^2xy^4+y^6$ due to Henry and Parusinski. We show that for $t\ne 0$,$t$ fixed, $\part F{t}\notin
J(f_t)_S$.  Let $\Phi_t(y)=[t,(ty^2,y),(-ty^2,y)]$. As in the previous example, $\phi_i$ parametrize the two different branches of the polar curve defined by 
$\part {f_t}x=0$. For the inclusion  $\part F{t}\in
J(f_t)_S$ to hold we must have $$-6txy^4\circ\phi_2-\left((-6txy^4\circ \phi_1)\left({{\part {f_t}y\circ\phi_2}\over{\part{f_t}y\circ\phi_1}}\right)\right)\in ((\part {f_t}y\circ \phi_2)(x\circ\phi_1-x\circ\phi_2))$$
which is equivalent to
$$6t^2y^6\left(1+\left({{1+2t^3}\over{1-2t^3}}\right)\right)\in (ty^7)$$
which fails.

\end{exmp}

The curve used in the above example consists of the two branches of the polar curve defined by $(f_t)_x=0$.  The ratio $\left({{1+2t^3}\over{1-2t^3}}\right)$ is the ratio of the two invariants defined by Henry and Parusinski for this function.

In checking the inclusion on curves, it is along the branches of the polar curves that the submodule generated by the doubles of the partial derivatives has rank $<2$, so the inclusion of the definition is more stringent. It is not hard to check that if the equation for the polar has the form $f_x-cf_y=0$ then $f_t\in J(f)_S$ implies the restriction of $f_t/f_y$ to the polar is a Lipschitz function with respect to the coordinates on  ${\mathbb C}^2$.

It is not surprising that the condition fails already for fixed $y$. As the proof of our main result shows, if the condition holds for fixed $y$ for generic $y$, then it holds for the family for a perhaps smaller Z-open set of parameters.

The appearance of the Henry-Parusinski invariant is also not a surprise, as the next proposition shows. The set-up in \cite{H-P} is the initial term of $f$ is not zero at $(1,0)$, and the polar is given by $f_x=0$. The polar is parameterized by giving a fractional power series of $x$ in terms of $y$. Fix a line in the tangent cone of $f=0$ along which the tangent cone is singular ({\it an exceptional tangent line}), and fix a parameterization $\phi$ of a polar curve tangent to the line. Then the H-P invariants are obtained by considering  the initial terms of $\{f\circ \phi\}$ as $\phi$ varies through the branches, under the equivalence relation given by the ${\mathbb C}^*$ action on the  coordinate $y$. Denote the initial term of a function of $y$ by $()_{IN}$.
\begin{Prop} In the above setting, if $\phi_i$ are two branches of a polar curve defined by $\part fx -c\part fy=0$ tangent to an exceptional tangent line different from the $x$-axis, then the degree of the ratios $\displaystyle{{{f}\circ\phi_2}\over{{f}\circ\phi_1}}$ and $\displaystyle{{\part {f}y\circ\phi_2}\over{\part{f}y\circ\phi_1}}$ agree, and if either degree is $0$, the initial terms agree.

\end{Prop}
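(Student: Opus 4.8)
The plan is to work along a single branch $\phi$ of the polar curve, parametrized by a fractional power series, and to extract the Euler-type relation forced by the defining equation $\part fx - c\part fy = 0$ of the polar. First I would set up notation: write $\phi(y) = (x(y), y)$ where $x(y) = ay^{q} + (\text{higher order})$ is one of the branches of the polar tangent to the chosen exceptional tangent line (so $q > 1$, since the line differs from the $x$-axis, and $a \neq 0$). Along any such branch one has $\part fx(\phi) = c\,\part fy(\phi)$. Differentiating $f \circ \phi$ with respect to $y$ by the chain rule gives
\[
\frac{d}{dy}\bigl(f\circ\phi\bigr) = \part fx(\phi)\,x'(y) + \part fy(\phi) = \part fy(\phi)\bigl(c\,x'(y) + 1\bigr).
\]
The key point is that $c\,x'(y)+1$ has initial term $1$ (its value at $y=0$ is $1$, since $x'(y)$ vanishes at $0$ because $q>1$); hence it is a unit in the local ring of fractional power series, and
\[
\mathrm{ord}_y\!\left(\tfrac{d}{dy}(f\circ\phi)\right) = \mathrm{ord}_y\!\left(\part fy(\phi)\right),
\qquad
\left(\tfrac{d}{dy}(f\circ\phi)\right)_{IN} = \left(\part fy(\phi)\right)_{IN}.
\]

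Next I would relate $\frac{d}{dy}(f\circ\phi)$ back to $f\circ\phi$ itself. If $f\circ\phi = \sum_{k\ge \ell} b_k y^k$ with $b_\ell \neq 0$, then $\frac{d}{dy}(f\circ\phi) = \sum_{k\ge\ell} k b_k y^{k-1}$, so $\mathrm{ord}_y\frac{d}{dy}(f\circ\phi) = \ell - 1$ and its initial term is $\ell b_\ell y^{\ell-1}$ (note $\ell \neq 0$ because $f\circ\phi$ vanishes at the origin, $f(0)=0$). Combining the two displays: $\mathrm{ord}_y(\part fy\circ\phi) = \ell - 1$ where $\ell = \mathrm{ord}_y(f\circ\phi)$, and the initial term of $\part fy\circ\phi$ is determined by that of $f\circ\phi$ up to the nonzero scalar $\ell$. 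Crucially, $\ell$ is the \emph{same integer} for every branch $\phi_i$ tangent to the fixed exceptional line — this is exactly the content of how the Henry--Parusinski invariants are organized, namely that the branches over a fixed exceptional tangent line all share the leading exponent in $y$, and the invariant lives in the initial coefficients modulo the ${\mathbb C}^*$-action on $y$. (If that common-order fact has not been established, one gets it by the same argument: the order of $f\circ\phi_i$ is controlled by where the polar branch meets the level sets of $f$, which depends only on the tangent line.)

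Now I would finish by comparing two branches $\phi_1, \phi_2$ tangent to the same exceptional line. Write $\ell$ for their common value $\mathrm{ord}_y(f\circ\phi_i)$. Then
\[
\frac{f\circ\phi_2}{f\circ\phi_1}
\quad\text{has order } 0,\qquad
\left(\frac{f\circ\phi_2}{f\circ\phi_1}\right)_{IN} = \frac{(f\circ\phi_2)_{IN}}{(f\circ\phi_1)_{IN}}
= \frac{b^{(2)}_\ell}{b^{(1)}_\ell}\,(\text{as a ratio of $y$-independent scalars}),
\]
while from the previous paragraph $\part fy\circ\phi_i$ has order $\ell-1$ with initial term $\ell\, b^{(i)}_\ell y^{\ell-1}$, so
\[
\frac{\part fy\circ\phi_2}{\part fy\circ\phi_1}
\quad\text{also has order } 0,\qquad
\left(\frac{\part fy\circ\phi_2}{\part fy\circ\phi_1}\right)_{IN}
= \frac{\ell\, b^{(2)}_\ell}{\ell\, b^{(1)}_\ell} = \frac{b^{(2)}_\ell}{b^{(1)}_\ell}.
\]
Thus both ratios have degree $0$ and their initial terms agree; in particular the statement "if either degree is $0$ the initial terms agree" holds, since in fact \emph{both} degrees are always $0$ and always equal. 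The general case (nonzero degree) is vacuous here because the orders coincide, but I would phrase the write-up so that the degree equality is the first conclusion and the initial-term equality the second, matching the proposition.

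I expect the main obstacle to be the bookkeeping needed to justify that $c\,x'(y) + 1$ is genuinely a unit and, more delicately, that the leading exponent $\ell = \mathrm{ord}_y(f\circ\phi_i)$ is independent of the branch $\phi_i$ among those tangent to a fixed exceptional tangent line — this is the place where the structure of the Puiseux expansions of the polar and the hypothesis "exceptional tangent line different from the $x$-axis" (ensuring $q>1$ so that $x'(0)=0$) really get used, and it is the step most likely to require care with fractional exponents rather than a one-line chain-rule computation.
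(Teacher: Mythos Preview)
Your chain-rule factorization $\frac{d}{dy}(f\circ\phi)=\part fy(\phi)\bigl(c\,x'(y)+1\bigr)$ is exactly the paper's core step, so the approach is right. But two of your supporting claims are not correct, and the paper's argument shows how to do without them.

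First, ``different from the $x$-axis'' only forces $q\ge 1$, not $q>1$. If the exceptional tangent line is $x=ay$ with $a\ne 0$ then $q=1$ and $x'(0)=a\ne 0$, so $c\,x'(y)+1$ has constant term $ca+1$, not $1$. The paper does not claim this factor is a unit with initial term $1$; instead it uses that $\phi_1,\phi_2$ are tangent to the \emph{same} line, hence $x_1'(0)=x_2'(0)$, so the constant parts of $c\,x_i'(y)+1$ coincide and cancel in the ratio. That is the correct use of the hypothesis.

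Second, and more seriously, you assert that $\ell:=\mathrm{ord}_y(f\circ\phi_i)$ is the same for all branches tangent to a fixed exceptional line, and you build the whole conclusion on this (declaring the nonzero-degree case vacuous). You flag it as the delicate step, and indeed it is not established; the paper's own remark immediately after the proof treats the case $p_1\ne p_2$ as genuinely possible, with a correction factor $p_1/p_2$. The paper avoids the issue entirely: from $\mathrm{ord}_y\frac{d}{dy}(f\circ\phi_i)=\ell_i-1$ one gets directly that the degree of $\dfrac{(f\circ\phi_2)'}{(f\circ\phi_1)'}$ equals $\ell_2-\ell_1$, the same as the degree of $\dfrac{f\circ\phi_2}{f\circ\phi_1}$; then the cancellation of the $(c\,x_i'+1)$ factors transfers this to $\dfrac{\part fy\circ\phi_2}{\part fy\circ\phi_1}$. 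The initial-term equality is then argued only under the additional hypothesis that this common degree is $0$. So drop the unproved equal-order claim and argue the degree statement and the initial-term statement separately, as the paper does.
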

\begin{proof} By the Chain rule 
$$\Der {f\circ\phi_i}y=\part f x\circ\phi_i \Der {x\circ\phi_i}y+\part f y\circ\phi_i$$
$$=\part f y\circ\phi_i(c\Der {x\circ\phi_i}y+1)$$
The degree of the ratio of the leading terms of $f\circ\phi_i$, and  $\Der {f\circ\phi_i}y$ are the same; if the degree is $0$, then

$$
({{f\circ\phi_2}\over{f\circ\phi_1}})_{IN}=({{\Der {f\circ\phi_2}y}\over{\Der {f\circ\phi_1}y}})_{IN}$$

In turn the right hand side ratio equals:
$$({{\part f y\circ\phi_2(c\Der {x\circ\phi_2}y+1)}\over {\part f y\circ\phi_1(c\Der {x\circ\phi_1}y+1)}})_{IN}$$
Since both branches are tangent to the same line the constant parts of $\Der {x\circ\phi_i}y$ are the same, hence the terms $c\Der {x\circ\phi_i}y+1$ cancel.
\end{proof}
If the degree of the ratio in the theorem is non-zero, then it is necessary to multiply the leading term of $\displaystyle{{\part {f}y\circ\phi_2}\over{\part{f}y\circ\phi_1}}$ by $p_1/p_2$, where $p_i$ is the degree of $f$ on the $i$-th branch to recover the initial term of the ratio of $\displaystyle{{{f}\circ\phi_2}\over{{f}\circ\phi_1}}$ from $\displaystyle{{\part {f}y\circ\phi_2}\over{\part{f}y\circ\phi_1}}$.

As example 3.3 shows, the ratio  $\displaystyle{{\part {f_t}y\circ\phi_2}\over{\part{f_t}y\circ\phi_1}}$ appears in checking the inclusion of $\part Ft\in J(f_t)_D$ along pairs of polar branches.

Given a branch of a polar curve of the type we are considering, we can consider the intersection multiplicity of the branch with $f_y$. The branches of the polar curve can be organized into packets (cf. \cite{T2},\cite{B}) for details; the contact of  $f_y$ with the generic member of the i-th packet is an invariant denoted $e_i$. We let $C(B_1,B_2)$ denote the contact between two polar branches.

Again restricting to the case of a function of 2 variables, we can ask if $g(x,y)\in J(f)_S$. Given a pair of branches of a polar curve, with parameterizations $\phi_1$ and $\phi_2$, $\Phi=(\phi_1,\phi_2)$, we can consider the determinant $D(g,f)$ of the matrix whose columns are $g_D$ and $(\part fy)_D$ along $\Phi$. 

\begin{Prop} Suppose $f(x,y)$ defines a reduced plane curve, $\Phi=(\phi_1,\phi_2)$, a pair of branches of a polar curve of $f$, defined by $f_x-af_y=0$, $g\in{\mathcal O}_2$. 

i) If $D(g,f)\circ \Phi=0$, then $g_D\circ \Phi\in \Phi^*(J(f)_D)$ if and only if $i(g,B_i)\ge e_i$, $i=1,2$.

ii) If the order of $D(g,f)\circ \Phi=$min$\{i(g,B_i)+e_j\}$, then $g_D\circ \Phi\in \Phi^*(J(f)_D)$ if and only if $i(g,B_i)\ge e_i+C(B_1,B_2)$, $i=1,2$.

\end{Prop}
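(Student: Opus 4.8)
\medskip
\noindent The plan is to compute the $\cO_1$-module $\Phi^{*}(J(f)_D)$ exactly, where $\cO_1=\mathbb{C}\{t\}$, and then to decide $g_D\circ\Phi\in\Phi^{*}(J(f)_D)$ by linear algebra over this discrete valuation ring.

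First I would record a presentation of $J(f)_D$. For $h=p\,\part fx+q\,\part fy\in J(f)$, substituting $c\circ\pi_2=c\circ\pi_1-(c\circ\pi_1-c\circ\pi_2)$ and its mirror $c\circ\pi_1=c\circ\pi_2+(c\circ\pi_1-c\circ\pi_2)$ (for $c=p,q$) expresses $h_D$ in terms of $(\part fx)_D$, $(\part fy)_D$ and terms $(c\circ\pi_1-c\circ\pi_2)$ times one of $(\part fx\!\circ\pi_1,0)$, $(\part fy\!\circ\pi_1,0)$, $(0,\part fx\!\circ\pi_2)$, $(0,\part fy\!\circ\pi_2)$. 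Since the differences $c\circ\pi_1-c\circ\pi_2$, as $c$ runs over $\cO_{\mathbb{C}^2,0}$, generate the diagonal ideal $\mathcal{I}_\Delta=(x\circ\pi_1-x\circ\pi_2,\ y\circ\pi_1-y\circ\pi_2)$, it follows that $J(f)_D$ is generated by $(\part fx)_D$, $(\part fy)_D$ and by $\mathcal{I}_\Delta$ times those four vectors; the reverse inclusions are immediate (e.g. $(0,(x\circ\pi_1-x\circ\pi_2)\,\part fx\!\circ\pi_2)=(x\circ\pi_1)(\part fx)_D-(x\,\part fx)_D$). Pulling back along $\Phi=(\phi_1,\phi_2)$ and writing $b_i=\part fy\circ\phi_i$: since each $\phi_i$ parametrizes a branch of $\part fx-a\,\part fy=0$ one has $\part fx\circ\phi_i=(a\circ\phi_i)\,b_i$ with $a\circ\phi_1-a\circ\phi_2\in\Phi^{*}\mathcal{I}_\Delta$, so every generator coming from $\part fx$ is redundant; since $f$ is reduced, $\{\part fx=\part fy=0\}$ is finite, so no polar branch lies in $\{\part fy=0\}$ and $b_i\ne0$; and choosing $\phi_1,\phi_2$ with a common ramification index (so that $y\circ\phi_1=y\circ\phi_2$ and the contact is measured by $x\circ\phi_1-x\circ\phi_2$) makes $\Phi^{*}\mathcal{I}_\Delta=(\delta)$ with $\mathrm{ord}_t(\delta)=C(B_1,B_2)$. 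Using $(\delta b_1,0)=\delta(b_1,b_2)-(0,\delta b_2)$ one gets
$$\Phi^{*}(J(f)_D)=\cO_1\cdot(b_1,b_2)+\cO_1\cdot(0,\delta b_2)=\cO_1\cdot(b_1,b_2)+\cO_1\cdot(\delta b_1,0).$$

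Second, set $a_i=g\circ\phi_i$, so $i(g,B_i)=\mathrm{ord}_t(a_i)$, $e_i=\mathrm{ord}_t(b_i)$ and $D(g,f)\circ\Phi=a_1b_2-a_2b_1$, and solve $(a_1,a_2)=\alpha(b_1,b_2)+\beta(0,\delta b_2)$ over $\cO_1$. The first coordinate forces $\alpha=a_1/b_1$, hence $i(g,B_1)\ge e_1$; using the presentation with $(\delta b_1,0)$, $i(g,B_2)\ge e_2$; and then the second coordinate demands $\delta\mid a_2/b_2-a_1/b_1=-D(g,f)\circ\Phi/(b_1b_2)$, i.e. $D(g,f)\circ\Phi=0$ or $\mathrm{ord}_t(D(g,f)\circ\Phi)\ge C(B_1,B_2)+e_1+e_2$. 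Conversely, granting $i(g,B_i)\ge e_i$ for $i=1,2$ and this last alternative, $\alpha=a_1/b_1$ and $\beta=-D(g,f)\circ\Phi/(\delta b_1b_2)$ lie in $\cO_1$ and witness the membership. This is the general criterion: (i) is the case $D(g,f)\circ\Phi=0$, where the last alternative is automatic; and for (ii), the hypothesis $\mathrm{ord}_t(D(g,f)\circ\Phi)=\min\{i(g,B_1)+e_2,\ i(g,B_2)+e_1\}$ (the order forced by the two products $a_1b_2$, $a_2b_1$) turns $\mathrm{ord}_t(D(g,f)\circ\Phi)\ge C(B_1,B_2)+e_1+e_2$ into the pair of inequalities $i(g,B_i)\ge e_i+C(B_1,B_2)$, which also subsumes $i(g,B_i)\ge e_i$.

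The part I expect to demand the most care is the first one: checking that the diagonal-ideal presentation of $J(f)_D$ is an equality rather than just an inclusion, and pinning down $\mathrm{ord}_t(\Phi^{*}\mathcal{I}_\Delta)=C(B_1,B_2)$, which rests on the standard choice of compatible Puiseux parametrizations of the two polar branches and on the convention that $e_i=\mathrm{ord}_t(\part fy\circ\phi_i)$ for $\phi_i$ generic in the $i$-th packet; everything else is the valuation computation above. As consistency checks: the Henry--Parusinski family $x^3-3t^2xy^4+y^6$ realizes case (ii) with $e_1=e_2=5$, $C(B_1,B_2)=2$, $i(g,B_i)=6<7$, so the inclusion fails; whereas for $\tfrac13x^3-y^7-xy^5$ one is between the two cases, since $i(g,B_i)+e_j=26$ but the two order-$26$ terms of $D(g,f)\circ\Phi$ cancel, leaving order $27$.
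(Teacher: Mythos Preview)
Your proof is correct and follows essentially the same approach as the paper: both identify $\Phi^*(J(f)_D)$ as the $\cO_1$-module generated by $(f_y\circ\phi_1,f_y\circ\phi_2)$ and $(0,(f_y\circ\phi_2)(x\circ\phi_1-x\circ\phi_2))$ and then reduce membership of $g_D\circ\Phi$ to the order inequality $\mathrm{ord}_t\,D(g,f)\circ\Phi\ge e_1+e_2+C(B_1,B_2)$ together with $i(g,B_i)\ge e_i$. Your version is somewhat more systematic---you derive the module presentation from the diagonal-ideal description of $J(f)_D$ rather than invoking the computation of Example~3.3, and you package (i) and (ii) into a single criterion---but the argument is the same.
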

\begin{proof} We first prove ii). Following the approach in example 3.3, 
it is clear that $g_D\circ \Phi\in \Phi^*(J(f)_D)$ if and only if one of the inclusions 

$$g\circ \phi_2-{{\part {f}y\circ\phi_2}\over{\part{f}y\circ\phi_1}}g\circ \phi_1\in \Phi^*(\part {f}y\circ p_2(x\circ p_1-x\circ p_2))$$
$$g\circ \phi_1-{{\part {f}y\circ\phi_1}\over{\part{f}y\circ\phi_2}}g\circ \phi_2\in \Phi^*(\part {f}y\circ p_1(x\circ p_1-x\circ p_2))$$
holds.

These inclusions are both equivalent to  the order of $D(g,f)\circ \Phi$ being greater than or equal to the order of $(\part {f}y\circ \phi_1)(\part {f}y\circ\phi_2)(x\circ p_1-x\circ p_2)\circ \Phi$, so if one holds then both hold.

Suppose   the order of $D(g,f)\circ \Phi=i(g,B_1)+e_2$. Then $i(g,B_1)\ge e_1+C(B_1,B_2)$ if and only if the order of $D(g,f)\circ \Phi\ge e_1+e_2+C(B_1,B_2)$ if and only if the order of $D(g,f)\circ \Phi\ge $order of $(\part {f}y\circ \phi_1)(\part {f}y\circ\phi_2)(x\circ p_1-x\circ p_2)\circ \Phi$. Note that this shows that if either of these equivalent conditions hold then 
$i(g,B_2)+e_1\ge e_1+e_2+C(B_1,B_2)$, hence $i(g,B_2)\ge e_2+C(B_1,B_2)$.

Now we prove i); assume $D(g,f)\circ \Phi=0$.  In this case, $g_D\circ \Phi$ is a multiple of $\part fy_D\circ \Phi$ if $s\ne 0$. The condition $D(g,f)\circ \Phi=0$ implies $g_D\circ \Phi(s)={ {g_i\circ\phi_i(s)} \over{\part fy\circ\phi_i(s)} }\part fy_D\circ\Phi(s)$ for $s\ne 0$, $i=1,2$. The condition that $D(g,f)\circ \Phi=0$ implies that 
$${ {g_1\circ\phi_1(s)} \over{\part fy\circ\phi_1(s)} }={ {g_2\circ\phi_2(s)} \over{\part fy\circ\phi_2(s)} }$$
so either both ratios are smooth or neither is. Both ratios smooth is equivalent to $i(g,B_i)\ge e_i$, $i=1,2$. If neither ratio is smooth, then the order of $g\circ\phi_i $ is less than the order of $\part fy\circ\phi_i$, hence less than the order of $(\part fy\circ\phi_i, (\part fy\circ\phi_i)(x\circ\phi_1-x\circ\phi_2))$, which implies $g_D\circ \Phi\notin \Phi^*(J(f)_D)$
\end{proof}

We always have 
\begin{Prop} Suppose $f(x,y)$ defines a reduced plane curve, $\Phi=(\phi_1,\phi_2)$, a pair of branches of a polar curve of $f$, defined by $f_x-af_y=0$, $g\in{\mathcal O}_2$. 

Then,  $g_D\circ \Phi\in \Phi^*(J(f)_D)$ if and only if $i(g/f_y ,B_1\cup B_2)\ge C(B_1,B_2)$.
\end{Prop}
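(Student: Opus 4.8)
The plan is to make the membership condition completely explicit by computing the $\cO_{1}$-module $\Phi^{*}(J(f)_{D})$ along the pair of polar branches ($\cO_{1}=\mathbb{C}\{t\}$, with $B_{1},B_{2}$ parametrized over a common uniformizer $t$), and then to recognize the resulting inequality as the stated contact condition. Write $\pi_{1},\pi_{2}$ for the two projections $\mathbb{C}^{2}\times\mathbb{C}^{2}\to\mathbb{C}^{2}$, $B_{i}$ for the branch carrying $\phi_{i}$, and $\cI\subset\cO_{1}$ for the ideal generated by $x\circ\phi_{1}-x\circ\phi_{2}$ and $y\circ\phi_{1}-y\circ\phi_{2}$ (the contact ideal of the pair, of $t$-order $C(B_{1},B_{2})$).

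The heart of the matter is computing this module, as in Example 3.3 and the proof of Proposition 3.6. Since $J(f)=(f_{x},f_{y})$, for $h=\alpha f_{x}+\beta f_{y}$ one has
$$h_{D}-(\alpha\circ\pi_{1})(f_{x})_{D}-(\beta\circ\pi_{1})(f_{y})_{D}=\bigl(0,\ (\alpha\circ\pi_{2}-\alpha\circ\pi_{1})\,f_{x}\circ\pi_{2}+(\beta\circ\pi_{2}-\beta\circ\pi_{1})\,f_{y}\circ\pi_{2}\bigr);$$
as the differences $z\circ\pi_{2}-z\circ\pi_{1}$ generate $(x_{2}-x_{1},y_{2}-y_{1})$, the module $J(f)_{D}$ is generated over $\cO_{\mathbb{C}^{2}\times\mathbb{C}^{2}}$ by $(f_{x})_{D}$, $(f_{y})_{D}$ and the sections $(0,w)$ with $w\in(x_{2}-x_{1},y_{2}-y_{1})\cdot\pi_{2}^{*}J(f)$. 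Restricting along $\Phi$ and using the polar relation $f_{x}\circ\phi_{i}=(a\circ\phi_{i})\,f_{y}\circ\phi_{i}$, the pullback of $\pi_{j}^{*}J(f)$ is the principal ideal $(f_{y}\circ\phi_{j})$ (and $f_{y}\circ\phi_{i}\neq0$, since $f$ reduced forces $B_{i}\not\subset\{f_{y}=0\}$), while $(f_{x})_{D}\circ\Phi$ is absorbed because $a\circ\phi_{2}-a\circ\phi_{1}\in\cI$. The conclusion I expect is
$$\Phi^{*}(J(f)_{D})\,\cO_{1}=\cO_{1}\cdot(f_{y}\circ\phi_{1},\,f_{y}\circ\phi_{2})\ +\ \cI\cdot(0,\,f_{y}\circ\phi_{2}).$$

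With the module in this shape the rest is bookkeeping. Dividing the two coordinates by the nonzero $f_{y}\circ\phi_{i}$ is an injection of $\cO_{1}$-modules carrying $\Phi^{*}(J(f)_{D})\cO_{1}$ onto $\{(a,a+c):a\in\cO_{1},\,c\in\cI\}$ and $g_{D}\circ\Phi$ to $\bigl((g/f_{y})\circ\phi_{1},(g/f_{y})\circ\phi_{2}\bigr)$; hence membership holds iff $(g/f_{y})\circ\phi_{1}\in\cO_{1}$ and $(g/f_{y})\circ\phi_{1}-(g/f_{y})\circ\phi_{2}\in\cI$. The first condition is in fact forced by membership --- projecting the inclusion to the $i$-th factor gives $g\circ\phi_{i}\in(f_{y}\circ\phi_{i})$, i.e.\ $g/f_{y}$ is regular along $B_{i}$ --- so membership entails that $g/f_{y}$ restrict to a regular function on $B_{1}\cup B_{2}$, and when that fails, $i(g/f_{y},B_{1}\cup B_{2})\ge C(B_{1},B_{2})$ fails as well. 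Under that regularity, $(g/f_{y})\circ\phi_{1}-(g/f_{y})\circ\phi_{2}\in\cO_{1}$, and since $\cO_{1}$ is a discrete valuation ring with $\cI=(t^{C(B_{1},B_{2})})$, membership becomes $\operatorname{ord}_{t}\bigl((g/f_{y})\circ\phi_{1}-(g/f_{y})\circ\phi_{2}\bigr)\ge C(B_{1},B_{2})$, i.e.\ $i(g/f_{y},B_{1}\cup B_{2})\ge C(B_{1},B_{2})$ --- precisely the assertion that $g/f_{y}$ restricts to a Lipschitz function on $B_{1}\cup B_{2}$ for the ambient coordinates.

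The step I expect to be the obstacle is the module computation: one must verify that the two-generator Jacobian ideal genuinely collapses, along the polar branches, to the single ``diagonal'' generator $(f_{y}\circ\phi_{1},f_{y}\circ\phi_{2})$ together with only the contact-ideal correction $\cI\cdot(0,f_{y}\circ\phi_{2})$ in the second slot. Everything afterwards is formal --- in particular the apparent asymmetry of the module in $\phi_{1}$ and $\phi_{2}$ is harmless, since membership already forces regularity of $g/f_{y}$ on both branches, after which the surviving condition is symmetric.
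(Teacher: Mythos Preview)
Your proposal is correct and follows essentially the same route as the paper: you compute $\Phi^{*}(J(f)_{D})$ exactly as in Example~3.3 and the proof of Proposition~3.5, then reduce membership to the order condition on $(g/f_{y})\circ\phi_{1}-(g/f_{y})\circ\phi_{2}$. The only cosmetic difference is that the paper packages the last step via the determinant $D(g,f)\circ\Phi$ from the preceding proposition rather than your componentwise division by $f_{y}\circ\phi_{i}$, and uses only $x\circ\phi_{1}-x\circ\phi_{2}$ (the branches being parametrized with a common $y$) in place of your full contact ideal $\mathcal{I}$.
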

\begin{proof} $i(g/f_y ,B_1\cup B_2)\ge C(B_1,B_2)$ means that the order of vanishing of $(g/f_y\circ \phi_1-g/f_y\circ \phi_2)$ is greater than or equal to $C(B_1,B_2)$. This is easily seen to be equivalent to $D(g,f)\circ \Phi$ being greater than or equal to the order of $(\part {f}y\circ \phi_1)(\part {f}y\circ\phi_2)(x\circ p_1-x\circ p_2)\circ \Phi$.
\end{proof}

The previous two results mean that $g_D\circ \Phi\in \Phi^*(J(f)_D)$ is equivalent to either the restriction of $g$ to a polar curve is in $J(f)$, restricted to the polar, or the restriction of $g/\part fy$ to a polar curve is a lipschitz function. Saying that $g\in J(f)_S$ implies in general that the restriction of $g/\part f{z_i}$ to a polar curve is a lipschitz function, for appropriate $i$; for on the polar curve the ratios $T_j/T_i$ are constant, so the Lipschitz condition becomes the ordinary condition for a function to be Lipschitz.

For the family of examples of form $f=x^p+y^q, p<q$ there is a positive result using the last two propositions which permits the comparison of our condition with that  of Fernandes and Ruas. A family of functions is strongly bi-Lipschitz trivial if there exists a Lipschitz vector field  which integrates to give the trivialization. In \cite {FR}, they found a criterion for a family of quaishomogeneous functions to be strongly bi-Lipschitz trivial. In the two variable case given weights $p$ and $q$, $p<q$, the condition for $f+tg$ to be strongly bi-Lipschitz trivial, where $f$ is weighted homogeneous of weight $d$ is for the weight of $g$, denoted $w(g)$ to satisfy 
$$w(g)\ge d+q-p$$

\begin{exmp} If $f(x,y)=x^p+y^q$,$(p-1,q-1)=1$, then $x^iy^j\in J(f)_s$, for $(0, j)$, $j\ge q$ or for 
$i(q-1)+j(p-1)\ge (q-1)(p-1)+(q-1)$. 
\end{exmp}
The condition $(0, j)$, $j\ge q$ is true because $y^j\in J(f)$; $(p-1,q-1)=1$ implies the generic polar curves of $f$ are irreducible. Because the generic polars are irreducible, we parameterize pairs of polars by 
$$(x=c\xi_1y^{q-1/p-1},y), (x=c\xi_2y^{q-1/p-1},y)$$ where the $\xi_i$ are $p-1$ roots of unity. Notice that for the monomials $x^iy^j$, we can choose the roots of unity so that $\xi^i_1\ne \xi^i_2$. This ensures that $D(x^iy^j,f)\circ \Phi$ has the expected order, so ii) of the proposition above applies.
For the  pairs of generic polar curves then $C(B_1,B_2)+e_1=(q-1)/(p-1)+(q-1)$, so we require 
$$i((q-1)/(p-1))+j\ge (q-1)/(p-1)+(q-1).$$ 

Although this condition only checks the inclusion of  $x^iy^j\in J(f)_s$ along polar curves, lengthy but straightforward calculations show that this is sufficient for all curves.

Comparing the estimate above with that of \cite {FR}, If $i=1$ then we see both estimates give the same lower bound on $j$, $j\ge q-1$. If $j=0$, then our lower bound on $i$ is $p$, while the bound of \cite {FR} is $p+(q-p)/p$. The convex hull of the three points $(1, q-1), (p,0), (p+(q-p)/p)$ then forms a wedge; integer points on the wedge to the left of the upper boundary give monomials for which our condition holds and that of \cite{FR} fails.

 We now work to show that the second equisingularity condition is generic in a family. The method of proof is similar to the proof that the corresponding condition for sets is generic. In turn, we will follow the lines of the proof of the Idealistic Bertini Theorem given in \cite{T1} p591-598. We will see exactly why the condition on fibers comes in. 
 
\noindent {\it Set-up} Let $F:{\mathbb C^k} \times {\mathbb C^n}\:\to {\mathbb C}$, be a family of functions parameterized by ${\mathbb C^k}$, with coordinates $y$ on ${\mathbb C^k}$, and $z$ on  ${\mathbb C^n}$. Then $f_y(z)=F(y,z)$ is a member of the family. Because we will be working at the generic point in the family of functions we can assume that the singular locus of $F$ is ${\mathbb C^k}\times 0$ which we also denote by $Y$, and that $Y$ is the union of the critical points of the $f_y$. Furthermore, we can assume that the $\AF$ condition holds along $Y$. This last condition implies 
$$\part F{y_i}\in \overline{J_z(F)}$$for all $y_i$.

 Our method of proof will involve working on the normalized blow-up of ${\mathbb C}^{k+n}\times_{Y,F} {\mathbb C}^{k+n}\times {\mathbb{ P}}^1$ by the ideal sheaf induced from the submodule $J_z(F)_D$, denoting $NB_{(J_z(F))_D}( {\mathbb C}^{k+n}\times_{Y,F} {\mathbb C}^{k+n}\times {\mathbb{ P}}^1)$ by $N$. We need to check that on each component of the exceptional divisor that the pullback of the element induced from $ ({{\partial F}\over{\partial {y_i}}})_D$ to the normalized blowup is in the pullback of $(J_z(F))_D$. 
 
The first step is to work in the module setting for the condition showing  that condition holds in our set-up for all pairs $((y,x),(y,x')$ where not both $x$ and $x'$ are $0$. This will reduce the proof of the main theorem to considering components of the exceptional divisor of $N$ which project to $Y$. 

\begin{Prop} The co-supports of $(J_z(F))_D$ on   $\mathbb {C}^{k+n}\times_Y \mathbb{ C}^{k+n}$ consist of

1) $Y\times (0,0)$

2) $\Delta(\mathbb {C}^{k+n}\times_Y \mathbb {C}^{k+n})$

3) $(0\times_Y \mathbb {C}^{k+n})\cup (\mathbb {C}^{k+n}\times_Y 0)$

\end{Prop}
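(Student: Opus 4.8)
The plan is to read the co-support off a manageable set of generators of $(J_z(F))_D$ and then test, point by point, where those generators fail to span $\cO^2$. Recall that the co-support of a submodule $M\subseteq\cO^2$ is the support of $\cO^2/M$ and, by Nakayama's lemma, a point $q$ lies outside it exactly when the values at $q$ of a set of generators of $M$ span $\mathbb{C}^2$. So the first step is to choose good generators. By definition $(J_z(F))_D$ is generated over $\cO_{\mathbb{C}^{k+n}\times_Y\mathbb{C}^{k+n}}$ by all $h_D=(h\circ p_1,h\circ p_2)$ with $h\in J_z(F)$; since $h\mapsto h_D$ is additive and $J_z(F)=(\part F{z_1},\dots,\part F{z_n})$, the $(g\,\part F{z_j})_D$ with $g\in\cO_{\mathbb{C}^{k+n}}$ already generate. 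Writing $g\circ p_1-g\circ p_2=\sum_i a_i(z_i\circ p_1-z_i\circ p_2)$ — which is possible because the $z_i\circ p_1-z_i\circ p_2$ generate the ideal of the diagonal in $\mathbb{C}^{k+n}\times_Y\mathbb{C}^{k+n}$ — one gets
$$\left(g\,\part F{z_j}\right)_D=(g\circ p_2)\left(\part F{z_j}\right)_D+\sum_i a_i\left((z_i\circ p_1-z_i\circ p_2)\left(\part F{z_j}\circ p_1\right),\,0\right),$$
and the mirror identity exchanging $p_1$ and $p_2$. Hence $(J_z(F))_D$ is generated by the finite list formed by the $\left(\part F{z_j}\right)_D$, the polarized elements $\left((z_i\circ p_1-z_i\circ p_2)\left(\part F{z_j}\circ p_1\right),0\right)$, and the $\left(0,(z_i\circ p_1-z_i\circ p_2)\left(\part F{z_j}\circ p_2\right)\right)$, $1\le i,j\le n$; in particular it is coherent, so its co-support is a closed analytic set.

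Next I would split at a point $q=((y,z),(y,z'))$ according to whether $z=z'$. Suppose $z\ne z'$, so $z_i\ne z'_i$ for some $i$. If $\nabla_z F(y,z)\ne 0$ (the gradient in the $z$-variables), choose $j$ with $\part F{z_j}(y,z)\ne0$; then the polarized generator $\left((z_i\circ p_1-z_i\circ p_2)\left(\part F{z_j}\circ p_1\right),0\right)$ has value a nonzero multiple of $(1,0)$ at $q$, and modulo that line the values of the $\left(\part F{z_j}\right)_D$ reduce to $\left(0,\part F{z_j}(y,z')\right)$; so if also $\nabla_z F(y,z')\ne 0$ the generators span $\mathbb{C}^2$ and $q$ is not in the co-support. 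Conversely, if $\nabla_z F(y,z)=0$ then every element of $J_z(F)$ vanishes at $(y,z)$, so every generator has its first component in the maximal ideal at $q$, the generator values all lie in $\{0\}\times\mathbb{C}$, and $q$ is in the co-support; likewise if $\nabla_z F(y,z')=0$. By the hypotheses of the set-up each $f_y$ has its only critical point at the origin, so $\nabla_z F(y,z)=0$ exactly when $z=0$. Thus, in the region $z\ne z'$, the co-support is precisely $(0\times_Y\mathbb{C}^{k+n})\cup(\mathbb{C}^{k+n}\times_Y 0)$ with the diagonal removed, i.e.\ part of $(3)$.

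If instead $z=z'$, then $p_1(q)=p_2(q)$, so for every $h\in J_z(F)$ the two components of $h_D$ take the same value $h(y,z)$ at $q$, and every generator in the list above has its two components equal there (the polarized ones even vanish). Every generator value then lies on the line $\{(a,a)\}\subset\mathbb{C}^2$, the generators never span $\mathbb{C}^2$, and the whole diagonal $\Delta(\mathbb{C}^{k+n}\times_Y\mathbb{C}^{k+n})$ lies in the co-support. Putting the two cases together, the co-support equals $\Delta(\mathbb{C}^{k+n}\times_Y\mathbb{C}^{k+n})\cup(0\times_Y\mathbb{C}^{k+n})\cup(\mathbb{C}^{k+n}\times_Y 0)$, which is the union of $(2)$ and $(3)$; these three copies of $\mathbb{C}^{k+n}$ are its irreducible components, and they meet along $\{z=z'=0\}=Y\times(0,0)$, the locus where $\nabla_z F$ vanishes on both factors and hence where $(J_z(F))_D$ drops to rank $0$ — this is $(1)$. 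That yields the three items in the statement.

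The only delicate point is the choice of generators in the first step: $(J_z(F))_D$ is genuinely larger than the submodule generated by the $n$ ``naive'' doubles $\left(\part F{z_j}\right)_D$, and it is exactly the extra polarized generators — available because $J_z(F)$ is an honest ideal, not merely the linear span of the partials — that shrink the off-diagonal co-support to the two critical loci. Correspondingly the diagonal has to be treated on its own, since there the two pullbacks of any single function coincide and polarization gives nothing; that dichotomy is the source of the two ``types'' $(2)$ and $(3)$, and of their degenerate common part $(1)$. Everything after the generators are fixed is the routine spanning check above, together with the elementary identification of the resulting set with $(1)$, $(2)$, $(3)$.
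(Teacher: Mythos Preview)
Your proof is correct and follows essentially the same approach as the paper: identify the co-support as the locus where $(J_z(F))_D$ has rank $<2$, then check pointwise that the rank is exactly $2$ off the listed sets and at most $1$ on them. The paper's proof is a two-line sketch (``Along these sets the rank of the module is at most $1$\dots Off of them\dots the module has rank $2$''), while you have supplied the details it omits --- in particular the explicit generating set for $(J_z(F))_D$ including the polarized elements $((z_i\circ p_1-z_i\circ p_2)\partial F/\partial z_j\circ p_1,0)$, which are precisely the generators the paper invokes in the next proposition.
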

\begin{proof} Along these sets the rank of the module is at most $1$, so these sets are in the co-support. Off of them we can assume our  pair $((y,x),(y,x')$  satisfies $x\ne x'$, $Df(x)\ne 0$, $Df(x')\ne 0$. This implies the module has rank $2$ at $((y,x),(y,x')$.\end{proof}

We show that a stronger condition holds off of $Y\times (0,0)$. 

\begin{Prop} In the set-up of this section, at points of $\mathbb {C}^{k+n}\times_Y \mathbb {C}^{k+n}$ off  of $Y\times (0,0)$, $$(\part F{y_i})_D\in {\overline {J_z(F)_D}}$$for all $y_i$.
\end{Prop}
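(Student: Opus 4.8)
The idea is to verify the integral closure condition $(\partial F/\partial y_i)_D\in\overline{J_z(F)_D}$ curve by curve, using the curve criterion for integral closure of modules recalled after Definition 2.1. So let $\Phi=(\phi_1,\phi_2)\colon(\mathbb C,0)\to\mathbb C^{k+n}\times_Y\mathbb C^{k+n}$ be an analytic curve whose image avoids $Y\times(0,0)$, write $\phi_j(t)=(y(t),x_j(t))$ (the $y$-components agree since we are on the fiber product over $Y$), and we must show $(\partial F/\partial y_i)_D\circ\Phi\in\Phi^*(J_z(F)_D)\mathcal O_1$. First I would dispose of the degenerate loci from Proposition 3.11 where the module has rank $<2$: on the diagonal $\Delta$ the statement is essentially the $\AF$ condition $\partial F/\partial y_i\in\overline{J_z(F)}$ applied on one factor (both coordinates of $(\partial F/\partial y_i)_D$ pull back to the same thing, which lies in $\phi^*(J_z(F))$), and on $(0\times_Y\mathbb C^{k+n})\cup(\mathbb C^{k+n}\times_Y 0)$ — say $x_1(t)\equiv 0$ — the vanishing $\partial F/\partial y_i(y,0)\equiv 0$ (because $Y$ is the critical locus and $F$ is constant $=0$ on $Y$, so all its derivatives along $Y$ vanish there) handles the first coordinate, while the second coordinate again reduces to $\AF$ on the $\phi_2$ factor. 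So the real content is curves whose generic point lies off all three sets, i.e. where $x_1(t)\neq x_2(t)$ generically and $Df_{y}$ is generically nonzero at both $x_j(t)$.

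For such a curve, the key mechanism is exactly the one in Lemma 2.4: differentiate. Consider $tD\bigl((F_D)\circ\Phi\bigr)$ where $F_D=(F\circ\pi_1,F\circ\pi_2)$. By the chain rule,
$$t\frac{d}{dt}(F\circ\phi_j)=\sum_i \frac{\partial F}{\partial y_i}(y(t),x_j(t))\,t\,y_i'(t)+\sum_l \frac{\partial F}{\partial z_l}(y(t),x_j(t))\,t\,x_{j,l}'(t).$$
The second sum on each factor already lies in $\phi_j^*(J_z(F))$, hence the pair of second sums lies in $\Phi^*(J_z(F)_D)$. Since $F\circ\phi_1\equiv F\circ\phi_2$ as functions of $t$ (we are on the fiber product over $F$ — this is precisely where "on fibers" enters, and why the statement is about $J_z(F)_D$ and not the product outside the fiber product), their derivatives agree, so $t\,\tfrac{d}{dt}(F\circ\phi_j)$ is the \emph{same} element $\alpha(t)$ of $\mathcal O_1$ in both factors. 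Therefore the pair $\bigl(\sum_i \partial_{y_i}F\circ\phi_1\cdot t y_i',\ \sum_i \partial_{y_i}F\circ\phi_2\cdot t y_i'\bigr)$ equals $(\alpha,\alpha)$ minus an element of $\Phi^*(J_z(F)_D)$, hence lies in $\Phi^*(J_z(F)_D)$ together with the pair $(\alpha,\alpha)$ itself.

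It remains to pass from the combination $\sum_i (\partial_{y_i}F)_D\circ\Phi\cdot t y_i'(t)$ lying in $\Phi^*(J_z(F)_D)$ to each individual $(\partial_{y_i}F)_D\circ\Phi$ lying in $\Phi^*(J_z(F)_D)\mathcal O_1$. This is where I expect the main obstacle, and it is handled by a standard reduction: since the curve criterion must hold for \emph{all} curves, and since we may precompose $\Phi$ with branched covers $t\mapsto t^N$ and perturb within the same stratum, one argues as in the proof of the $\AF$-type statements that it suffices to treat curves for which exactly one $y_i'$ has minimal order, or alternatively one reduces to the case $k=1$ by slicing the parameter space by generic lines (the condition is preserved under such restriction since it is an integral-closure condition and $Y$, the $\AF$ condition, and the three co-support strata all restrict well). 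With $k=1$ the combination is literally $(\partial_y F)_D\circ\Phi\cdot t y'(t)$, and since $ty'(t)$ and $t\cdot\tfrac{d}{dt}$ of the ambient coordinate have the same order in $t$ — the observation driving Lemma 2.4 — multiplication by $ty'(t)$ does not change membership, giving $(\partial_y F)_D\circ\Phi\in\Phi^*(J_z(F)_D)\mathcal O_1$. Assembling the three cases (diagonal, the coordinate-hyperplane stratum, and the generic stratum) over all test curves yields the integral closure membership off $Y\times(0,0)$, which is the assertion. The delicate point to get right in the write-up is the genericity/slicing reduction that isolates a single $y_i$, since the naive chain-rule identity only controls the $y'$-weighted sum; everything else is the chain rule plus the fiber condition $F\circ\phi_1=F\circ\phi_2$.
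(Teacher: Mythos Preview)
Your proposal has a genuine gap: the chain-rule argument you call the ``real content'' relies on $F\circ\phi_1\equiv F\circ\phi_2$, which you justify by saying ``we are on the fiber product over $F$.'' But the proposition is stated on $\mathbb C^{k+n}\times_Y\mathbb C^{k+n}$, the fiber product over $Y$ only, \emph{not} over $Y$ and $F$. The condition $F\circ\phi_1=F\circ\phi_2$ is simply not available here; you have confused this preliminary proposition with the later main theorem, where the $F$-fibered product does appear. Without that identity your derivative computation no longer forces the two $y$-sums to match, and the argument collapses. The subsequent ``reduction to $k=1$ by slicing'' is also only sketched, and would need a careful justification that the integral-closure condition really restricts well, but this becomes moot given the earlier error.

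You have also inverted where the work lies. The paper's proof is far more elementary and never uses the chain rule at all. It proceeds by a direct local analysis at a point $((y,x),(y,x'))$ off $Y\times(0,0)$, using as generators of $J_z(F)_D$ both the doubles $(\partial F/\partial z_j)_D$ and the elements $p_2^*(J_z(F))\cdot(0,I_\Delta)$. Two cases: if $x=x'\neq 0$, then $J_z(F)$ is the full ring at $(y,x)$, so $J_z(F)_D$ contains $(\mathcal O_{n+k})_D$, and in particular contains $(\partial F/\partial y_i)_D$ literally. If instead $x=0$ and $x'\neq 0$, take any curve $\Phi=(\phi_1,\phi_2)$ through the point; the $\AF$ condition at $(y,0)$ gives coefficients $\xi_j(t)$ with $\sum_j\xi_j(t)\,\partial F/\partial z_j\circ\phi_1=\partial F/\partial y_i\circ\phi_1$, so subtracting $\sum_j\xi_j(t)(\partial F/\partial z_j)_D\circ\Phi$ leaves an element of the form $(0,g(t))$. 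Since $x'\neq 0$ and $x\neq x'$, both $p_2^*(J_z(F))$ and $I_\Delta$ are the whole ring at the point, so $\Phi^*\bigl(p_2^*(J_z(F))(0,I_\Delta)\bigr)=\mathcal O_1\cdot(0,1)$, which absorbs $(0,g(t))$. That is the entire proof; no differentiation of $F$ along the curve, no fiber condition, and no reduction in $k$ are needed.
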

\begin{proof} We consider two cases: $((y,x),(y,x')$ with $x=x'$ and $x\ne 0$, and $((y,x),(y,x')$, $x= 0$, $x\ne x'$.

For generators of $J_z(F)_D$ we use $(\part F{z_i})_D$ and $p_2^*(J_z(F))(0,I_{\Delta})$, where $I_{\Delta}$ denotes the ideal of the diagonal of  $\mathbb {C}^{k+n}\times_Y \mathbb {C}^{k+n}$ .

In the first case,  $J_z(F)$ at $(y,x)$ is ${\mathcal O}_{n+k}$, since $x\ne 0$. This implies that $(J_z(F))_D$ at $((y,x),(y,x)$ contains  $({{\mathcal O}_{n+k}})_D$ which implies the result.

In the second case, let $\Phi=(\phi_1,\phi_2)$ be a curve on $\mathbb {C}^{k+n}\times_Y \mathbb {C}^{k+n}$, $\Phi(0)= ((y,0),(y,x')$. Since the $\AF$ condition holds at $(y,0)$, there exist $\xi_j(t)$ such that 
$D_z(F)\circ \phi_1(\xi_j(t))={{\partial F}\over{\partial {y_j}}}\circ \phi_1$.
Then $${{\partial F}\over{\partial {y_j}}}_D\circ \Phi - (D_z(F)_D\circ \Phi (t))(\xi_j(t))=(0, g(t))$$ for some $g(t)$.
Since $(y,x')$ is not in the co-support of $J_z(F)$, and $((y,x),(y,x')$ is not in the co-support of $I_{\Delta}$, it follows that $$\Phi^*(p_2^*(J_z(F))(0,I_{\Delta}))={\mathcal {O}}^1(0,1),$$ so $(0, g(t))\in \Phi^*(p_2^*(J_z(F)))(0,I_{\Delta})$ which finishes the proof.
\end{proof}

\begin{theorem} Suppose $F(y,z)$ is a family of functions in the set-up of this section. Then there exists a Zariski open subset $U$ of $Y$ such that at points of $U\times (0,0)$, 
$$\part F{y_i}\in J_z(F)_{S,F}$$for all $y_i$.\end{theorem}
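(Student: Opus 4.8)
The plan is to follow the structure of the proof of the Idealistic Bertini Theorem in \cite{T1}, adapted to the product setting. By Proposition~3.10, the co-support of $J_z(F)_D$ on $\mathbb{C}^{k+n}\times_{Y,F}\mathbb{C}^{k+n}$ decomposes into three pieces, and by Proposition~3.11 the strong inclusion $(\partial F/\partial y_i)_D\in\overline{J_z(F)_D}$ already holds at every point off $Y\times(0,0)$. Consequently, working on $N=NB_{(J_z(F))_D}(\mathbb{C}^{k+n}\times_{Y,F}\mathbb{C}^{k+n}\times\mathbb{P}^1)$, the only components $E_\alpha$ of the exceptional divisor for which the inclusion $\pi_N^*((\partial F/\partial y_i)_D)\in\pi_N^*(J_z(F)_D)$ needs to be verified are those whose image in $\mathbb{C}^{k+n}\times_{Y,F}\mathbb{C}^{k+n}$ lies in $Y\times(0,0)$. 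So the first step is to record this reduction explicitly.

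Second, I would analyze the generic point of such a component $E_\alpha$. The generic point corresponds, in the spirit of the curve criterion for integral closure, to a pair of analytic arcs $\Phi=(\phi_1,\phi_2)\colon(\mathbb{C},0)\to\mathbb{C}^{k+n}\times_{Y,F}\mathbb{C}^{k+n}$ with $\Phi(0)\in Y\times(0,0)$, together with a choice of direction in $\mathbb{P}^1$ tracking the relative size of the two factors of $J_z(F)_D$; the relevant generators are $(\partial F/\partial z_i)_D$ and $p_2^*(J_z(F))\cdot(0,I_\Delta)$. The key point — and this is exactly where the fiber condition enters — is that since $\Phi$ maps into $\mathbb{C}^{k+n}\times_{Y,F}\mathbb{C}^{k+n}$, the Taylor expansions of $F\circ\phi_1$ and $F\circ\phi_2$ in $t$ coincide. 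Using the $\AF$ condition along $Y$, for each $y_j$ there exist vector fields, i.e. coefficient arcs $\xi_j(t)$, with $D_z(F)\circ\phi_\ell(\xi_j(t))=(\partial F/\partial y_j)\circ\phi_\ell$ for $\ell=1,2$; subtracting, $(\partial F/\partial y_j)_D\circ\Phi-(D_z(F)_D\circ\Phi)(\xi_j(t))=(0,g(t))$ lands in the "diagonal" generator $(0,I_\Delta)$ part. One then applies the Lemma and the argument of Proposition~2.6 on the double: because $F\circ\phi_1-F\circ\phi_2\equiv0$, the order in $t$ of the difference of the $(\partial F/\partial y_j)\circ\phi_\ell$ matches the order of the difference of the $z$-coordinates along the arcs, which is precisely what is needed to absorb $(0,g(t))$ into $\Phi^*(p_2^*(J_z(F))(0,I_\Delta))$ after dividing by a local generator of $\pi_N^*(J_z(F)_D)$, i.e. to conclude that $(\partial F/\partial y_j)_D/\!(\text{generator})$ is Lipschitz with respect to the local coordinates at the generic point of $E_\alpha$.

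Third, I would assemble these pointwise/curve-wise statements: the inclusion $\pi_N^*((\partial F/\partial y_i)_D)\in\pi_N^*(J_z(F)_D)$ holds at the generic point of every component of the exceptional divisor of $N$, hence on all of $N$ by coherence and the curve criterion (Definition~2.1 and the remark following it), hence, pushing the resulting Lipschitz inequalities down to a suitable open cover of $\mathbb{C}^{k+n}\times_{Y,F}\mathbb{C}^{k+n}$, we get $\partial F/\partial y_i\in J_z(F)_{S,F}$ at the origin. Since all of this is phrased in integral-closure/Lipschitz-saturation terms, the set of parameters where it holds is Zariski open in $Y$, which produces the desired $U$; this last genericity step is the point already noted right after Definition~3.2.

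The main obstacle I expect is the second step: controlling the ratio $(\partial F/\partial y_j)_D$ divided by a local generator of $\pi_N^*(J_z(F)_D)$ at the generic point of components of $E_\alpha$ lying over $Y\times(0,0)$, and in particular verifying that the bookkeeping with the two generator types $(\partial F/\partial z_i)_D$ and $(0,I_\Delta)$ really does close up — i.e. that the equality of Taylor expansions of $F\circ\phi_1$ and $F\circ\phi_2$ forces the needed order estimate uniformly over the relevant arcs, rather than just along polar-type arcs as in the two-variable examples. Handling the arcs where the two factors degenerate at different rates (the $\mathbb{P}^1$-direction being $0$ or $\infty$) versus comparable rates may require splitting into cases as in Proposition~3.11, but the substantive input is always the $\AF$ condition plus the fiber constraint.
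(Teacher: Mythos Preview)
Your reduction to components of the exceptional divisor of $N$ lying over $Y\times(0,0)$ is correct. The gap is in your second step. You use $\AF$ along $\phi_1$ to produce coefficients $\xi_j(t)$ and then need to absorb the remainder $(0,g(t))$ on $\phi_2$ into $\Phi^*\bigl(p_2^*(J_z(F))(0,I_\Delta)\bigr)$. Your justification --- that $F\circ\phi_1\equiv F\circ\phi_2$ forces ``the order in $t$ of the difference of the $(\partial F/\partial y_j)\circ\phi_\ell$ to match the order of the difference of the $z$-coordinates'' --- is asserted without argument, and there is no evident mechanism for it. The fiber identity is one scalar relation; it says nothing direct about $g(t)=(\partial F/\partial y_j)\circ\phi_2-\sum_i(\partial F/\partial z_i)\circ\phi_2\cdot\xi_i(t)$, which depends on the $\xi$'s manufactured on the \emph{other} branch. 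Lemma~2.5 and Proposition~2.6 control $f_D$, not $(\partial F/\partial y_j)_D$, so invoking them here does not close the gap.

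The paper's proof uses a different idea: Teissier's differentiation trick, carried out on $N$. One first shows (via Proposition~2.6 fiberwise, then passing to the family) that $F_D\in\overline{J_z(F)_D}$ along $U\times(0,0)$; thus $(F\circ p_1 + sF\circ p_2)\circ\pi$ vanishes along each relevant component of $E$ to at least the order of the pulled-back ideal $\mathcal J(F)_D$. At a generic smooth point of such a component one chooses coordinates $(y',u',x')$ with $y'_i=y_i\circ p$ and $u'$ a reduced local equation for $E$; the key structural fact is $\partial u'/\partial y'_i=0$, so applying $\partial/\partial y'_i$ preserves $u'$-order. The chain rule then expresses $(\partial F/\partial y_i)\circ\pi_1 + s\,(\partial F/\partial y_i)\circ\pi_2$ as a sum of terms visibly in $\mathcal J(F)_D$ plus one new term $(\partial s/\partial y'_i)\,F\circ p_2\circ\pi$. \emph{This} is precisely where the fiber condition enters: on $\mathbb C^{k+n}\times_{Y,F}\mathbb C^{k+n}$ one has $F\circ p_1 + sF\circ p_2=(1+s)\,F\circ p_2$, and since differentiating $(1+s)$ in $y'_i$ does not lower its $u'$-order, the extra term has the required order. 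Without the fiber constraint this term is uncontrolled, which is exactly why the condition of Definition~3.1 is not generic.

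So the missing ingredient is: differentiate the pullback of $F_D$ (not of $(\partial F/\partial y_i)_D$) on the normalized blow-up with respect to the lifted parameter coordinates, and use the fiber condition to handle the $\partial s/\partial y'_i$ term that appears. Your direct $\AF$-on-curves approach does not supply the needed order estimate on $g(t)$.
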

\begin{proof}
We work on $N$ and  we need to check that on each component of the exceptional divisor that the pullback of the element induced from $ ({{\partial F}\over{\partial {y_i}}})_D$ to the normalized blowup is in the pullback of $(J_z(F))_D$ which we denote by $\cJ(F)_D$. Denote the projection  to $Y$ by $p$. By the previous lemmas we need only consider those components of the exceptional divisor which project to $Y$ under the map to $\mathbb {C}^{k+n}\times_{Y,F} \mathbb {C}^{k+n}$. Since we are working over a Zariski open subset of $Y$ we may assume that every such component maps surjectively onto $Y$. Since we are working on the normalization, we can work at a point $q$ of the exceptional divisor such that $E$ is smooth at  $q$, $N$ is smooth at $q$ and the projection to $Y$ is a submersion at $q$.  Thus, we can choose coordinates at $q$, $(y',u', x')$, such that $y_i'=y_i\circ p$, and $u'$ defines $E$ locally with reduced structure. The key point is that ${ {\partial u'}\over{\partial {y_i'}}}=0\hskip 3pt \forall y_i$.

Following the ideas of the proof below, we can choose $U$, so that if $h_D$ is in the integral closure of $J_z(F)_D$ on the fibers of $\mathbb {C}^{k+n}\times_{Y,F} \mathbb {C}^{k+n}$ over $Y$, then 
$h_D\in \overline{J_z(F)_D}$ along $U\times(0,0)$. For example, we know, by Proposition 2.6, that  $(f_y)_D\in {\overline{ (J(f_y)_D}}$ on fibers of $\mathbb {C}^{k+n}\times_{Y,F} \mathbb {C}^{k+n}$ over $Y$, which is equivalent to  $F_D\in {\overline{ (J_z(F)_D}}$ on the fibers. This implies $F_D\in {\overline{ (J_z(F)_D}}$ along $Y$ in  $\mathbb {C}^{k+n}\times_{Y,F} \mathbb {C}^{k+n}$.

Let $\pi_i$ denote the composition of $\pi$, the projection from $N$ to  $\mathbb {C}^{k+n}\times_{Y,F} \mathbb {C}^{k+n}\times {\mathbb P} ^1$ with the projection $p_i$ to the $i$-th factor of $\mathbb {C}^{k+n}\times_{Y,F} \mathbb {C}^{k+n}\times {\mathbb P} ^1$, $i=1, 2$.

We have that $F\circ p_1+sF\circ p_2=F\circ p_1(1+s)=F\circ p_2(1+s)$  on $ \mathbb {C}^{k+n}\times_{Y,F} \mathbb {C}^{k+n}\times{\mathbb P}^1$. Pull this back to $N$ by $\pi$. Since we can assume  $F_D\in \overline{J_z(F)_D}$ along $U\times(0,0)$, the order of vanishing of $(F\circ p_1+sF\circ p_2)\circ \pi$ along each component of the exceptional divisor is at least as great as the order of vanishing of $\cJ(F)_D$. 

This follows because by the choice of $q$ we know the order of vanishing of $\cJ(F)_D$ locally along the fiber of $E$ in the fiber of $N$ is the same as the order of vanishing of $\cJ(F)_D$ locally along $E$.

Now take the partial derivative of $(F\circ p_1+sF\circ p_2)\circ \pi$ with respect to $y'_i$ at $q$. We get by the chain rule:

$${{(F\circ p_1+sF\circ p_2)\circ \pi}\over{\partial {y'_i}}}={{\partial F}\over{\partial {y_i}}}
\circ \pi_1+s{{\partial F}\over{\partial {y_i}}}\circ \pi_2 + (\sum\limits_{j=1}^{n}{{\partial F}\over{\partial {z_j}}}\circ \pi_1 {{\partial {z_i\circ \pi_1}}\over{\partial {{y'}_i}}}$$
$$+s{{\partial F}\over{\partial {z_j}}}\circ \pi_2 {{\partial {z_j\circ \pi_2}}\over{\partial {{y'}_i}}})+\part{s}{{y'}_i}F\circ p_2\circ \pi.$$

Since $y'_i$ and $u'$ are independent coordinates, the order of vanishing in $u'$ of the left hand side is the same as the order of vanishing in $u'$ of $(F\circ p_1+sF\circ p_2)\circ \pi$

Unlike the case of hypersurfaces, there is a term involving the derivative of $s$. If the component of $E$ at $q$ surjects onto $Y\times 0\times {\mathbb P}^1$, then this term is $0$ as $y'_i$ and $s$ are independent. Otherwise it may be non-zero. However,  we are working on  $N$, hence $(F\circ p_1+sF\circ p_2)\circ \pi=(F\circ p_2\circ \pi)(1+s)$, further the order of vanishing in $u'$ of the partials of $(F\circ p_2\circ \pi)$ and $(1+s)$ is the same as the originals. This implies the order of vanishing of $\part{s}{{y'}_i}F\circ p_2\circ \pi$ is at least as great as $(F\circ p_2\circ \pi)(1+s)$, so this term is well-behaved.

Now we work to re-shape the rest of the  terms to prove the theorem. Notice that since $z_i$ all vanish along $Y$, $z_i\circ \pi_j$ all vanish along $E$ at $q$.
We have 

$$\part{ F}{ {y_i}}\circ \pi_1+s\part{ F}{ {y_i}}\circ \pi_2 =$$
$$-(\sum\limits_{j=1}^{n}(\part{ F}{z_j}\circ \pi_1) 
(\part{ {z_j\circ \pi_1}}{ {y'}_i})+s((\part{ F}{{z_j}}\circ \pi_2) (\part{ {z_j\circ \pi_1}}{ {y'}_i})$$
$$-(\part{ F}{ {z_j}}\circ \pi_2) \left[ \part{ {z_j\circ \pi_1}}{ {y'}_i}-\part{ {z_j\circ \pi_2}}{{y'}_i}\right]))-{{\partial(F\circ p_1+sF\circ p_2)\circ \pi}\over{\partial {y'_i}}}-\part{s}{{y'}_i}F\circ p_2\circ \pi.$$

We want to show that the terms on the right hand side in the above expression are in the ideal generated by the pullback of the ideal sheaf on  $ \mathbb {C}^{k+n}\times_{Y,F} \mathbb {C}^{k+n}\times{\mathbb P}^1$ induced by $J_z(F))_D$. For this we use the curve criterion. We use a test curve to show that the order of vanishing of $\part{ F}{ {y_i}}\circ \pi_1+s\part{ F}{ {y_i}}\circ \pi_2$ along a component is same as the order of vanishing of the ideal $(J_z(F))_D$. This will imply that  $\part{ F}{ {y_i}}\circ \pi_1+s\part{ F}{ {y_i}}\circ \pi_2$ is in the ideal along the component.  We can choose a curve $\tilde{\Phi}$ such that $\tilde{\Phi}$ is the lift of a curve $\Phi=(\psi, \phi_1, \phi_2)$, $\Phi:{\mathbb C}:\to {\mathbb P}^1\times  \mathbb {C}^{k+n}\times_{Y,F} \mathbb {C}^{k+n}$. Further $\tilde\Phi(0)$ is a smooth point of the component and the ambient space, $\tilde\Phi$ is transverse to the component so that $u'\circ \tilde\Phi=t$, where $t$ is a coordinate in the local ring of ${\mathbb {C}}$ at the origin. This implies that if an ideal is generated by  $u'^p$, that the pullback is generated by $t^p$. Since the pullback of the ideal $(J_z(F))_D$ is locally principal, we can choose $\tilde\Phi(0)$ so that $(J_z(F))_D$ is generated by a power of $u'$.

Then we have $$\tilde\Phi^*(\part{ F}{ {y_i}}\circ \pi_1+s\part{ F}{ {y_i}}\circ \pi_2)=$$

$$-(\sum\limits_{j=1}^{n}(\part{ F}{ {z_j}}\circ \pi_1\circ \tilde\phi_1) 
 (\part{ {z_j\circ \pi_1}}{ {y'}_i})\circ \tilde\phi_1+\psi_2/\psi_1((\part{F}{ {z_j}}\circ \pi_2)\circ \tilde\phi_2 (\part{ {z_j\circ \pi_1}}{ {y'}_i}\circ \tilde\phi_1$$
$$-(\part{ F}{ {z_j}}\circ \pi_2)\circ \tilde\phi_2 \left[ \part{ {z_j\circ \pi_1}}{ {y'}_i}\circ \tilde\phi_1-
\part{ {z_j\circ \pi_2}}{ {y'}_i}\tilde\phi_2\right]))+\dots,$$
where the dots take the place of those terms which already vanish to the desired order. 
The right hand side will clearly be in the ideal ${\Phi}^*(J_z(F))_D)$, provided the pullback of $(\part{ F}{ {z_j}}\circ \pi_2) ( \part{ {z_j\circ \pi_1}}{ {y'}_i}-
\part{ {z_j\circ \pi_2}}{ {y'}_i})$ is. However, by construction, since $y'$ and $u'$ are independent coordinates,  the order of 
$\part{ {z_j\circ \pi_1}}{ {y'}_i}-\part{ {z_j\circ \pi_2}}{ {y'}_i}$ in $u'$ will be the same as the order of $z_j\circ \pi_1 -z_j\circ \pi_2$. Hence the pullback of $(\part{F}{ {z_j}}\circ \pi_2) ( \part{ {z_j\circ \pi_1}}{ {y'}_i}-
\part{ {z_j\circ \pi_2}}{ {y'}_i})$ does vanish to the desired order in $t$, which finishes the proof.
\end{proof}

\end{document}